\documentclass[a4paper,12pt]{article}
\usepackage{amssymb,amsthm,graphicx,multicol,amsmath,latexsym,amsmath,amssymb,fancyhdr, cancel,enumitem,mathrsfs,xfrac,pgfgantt,ebgaramond,float,tocloft,stmaryrd}
\setlength{\topmargin}{-.65 in}
\setlength{\headheight}{0 in}

\setlength{\textheight}{9.5in}
\setlength{\textwidth}{6.7in}
\setlength{\oddsidemargin}{0in}


\newtheorem{theorem}{Theorem}[section]
\newtheorem{lemma}[theorem]{Lemma}

\newtheorem{proposition}[theorem]{Proposition}

\newtheorem{remark}[theorem]{Remark}

\newtheorem{principle}[theorem]{Principle}

\def\R{\mathbb{R}}

\def\N{\mathbb{N}}

\def\F{\mathbb{F}}

\def\lbrak{\left[}
\def\rbrak{\right]}
\def\abClosed{\lbrak a,b\rbrak}
\def\lpar{\left(}
\def\rpar{\right)}
\def\abOpen{\lpar a,b\rpar}

\def\into{\longrightarrow}
\def\implies{\Longrightarrow}

\def\setdiff{\backslash}

\def\ES{{\bf(ES)}}

\def\BVT{{\bf(BVT)}}
\def\EVT{{\bf(EVT)}}
\def\IVT{{\bf(IVT)}}

\def\iMVT{{\bf(iMVT)}}
\def\MVI{{\bf(MVI)}}

\def\CFT{{\bf(CFT)}}
\def\SIFT{{\bf(SIFT)}}
\def\IFT{{\bf(IFT)}}

\def\FTCi{{\bf(FTC1)}}
\def\FTCii{{\bf(FTC2)}}

\def\AP{{\bf(AP)}}
\def\NIP{{\bf(NIP)}}

\def\BWP{{\bf(BWP)}}

\def\UCT{{\bf(UCT)}}

\def\NIone{{\bf(NI1)}}
\def\NItwo{{\bf(NI2)}}

\def\MVIp{{\bf(MVI')}}
\def\CFTp{{\bf(CFT')}}
\def\IFTp{{\bf(IFT')}}
\def\SIFTp{{\bf(SIFT')}}
\def\IVTp{{\bf(IVT')}}
\def\DITp{{\bf(DIT')}}
\def\intIp{{\bf(I1')}}

\def\edBVT{{\bf(ED1)}}

\def\edIVT{{\bf(ED2)}}
\def\edUCT{{\bf(ED3)}}
\def\edDITp{{\bf(ED4)}}
\def\edSIFTp{{\bf(ED5)}}
\def\edIFTp{{\bf(ED6)}}
\def\edMVIp{{\bf(ED7)}}
\def\edCFTp{{\bf(ED8)}}
\def\edintIp{{\bf(ED9)}}
\def\edintII{{\bf(ED10)}}

\def\edX{{\bf(EDx)}}
\def\edY{{\bf(EDy)}}
\def\edZ{{\bf(EDz)}}

\def\ediiBVT{{\bf(ED11)}}
\def\ediiEVT{{\bf(ED13)}}
\def\ediiIVT{{\bf(ED12)}}
\def\ediiUCT{{\bf(ED13)}}
\def\ediiDITp{{\bf(ED14)}}
\def\ediiSIFTp{{\bf(ED15)}}
\def\ediiIFTp{{\bf(ED16)}}
\def\ediiMVIp{{\bf(ED17)}}
\def\ediiCFTp{{\bf(ED18)}}
\def\ediiintIp{{\bf(ED19)}}
\def\ediiintII{{\bf(ED20)}}

\def\ediiiBVT{{\bf(ED21)}}

\def\ediiiIVT{{\bf(ED22)}}
\def\ediiiUCT{{\bf(ED23)}}
\def\ediiiDITp{{\bf(ED24)}}
\def\ediiiSIFTp{{\bf(ED25)}}
\def\ediiiIFTp{{\bf(ED26)}}
\def\ediiiMVIp{{\bf(ED27)}}
\def\ediiiCFTp{{\bf(ED28)}}
\def\ediiiintIp{{\bf(ED29)}}
\def\ediiiintII{{\bf(ED30)}}

\def\DIT{{\bf(DIT)}}

\def\mSIFTp{\mbox{\bf(SIFT')}}
\def\mIFTp{\mbox{\bf(IFT')}}
\def\mCFTp{\mbox{\bf(CFT')}}

\def\mintIp{\mbox{\bf(I1')}}
\def\mintI{\mbox{\bf(I1)}}

\def\mMVI{\mbox{\bf(MVI)}}

\def\mSIFT{\mbox{\bf(SIFT)}}
\def\mIFT{\mbox{\bf(IFT)}}

\def\arbstate{{\bf(P)}}
\def\arbstateII{{\bf(Q)}}

\def\intI{{\bf(I1)}}
\def\intII{{\bf(I2)}}

\def\upDar{\mathcal{U}}
\def\lowDar{\mathcal{L}}

\def\theSet{\mathcal{S}}

\def\sub{\subseteq}

\def\cseq{\lpar c_n\rpar_{n\in\N}}

\def\csubseq{\lpar c_{N_k}\rpar_{k\in\N}}

\def\aseq{\lpar a_n\rpar_{n\in\N}}

\def\asubseq{\lpar a_{N_k}\rpar_{k\in\N}}

\def\bseq{\lpar b_n\rpar_{n\in\N}}

\def\cseqN{\lpar -c_n\rpar_{n\in\N}}

\def\xseq{\lpar x_n\rpar_{n\in\N}}

\def\xsubseq{\lpar x_{N_k}\rpar_{k\in\N}}

\def\tseq{\lpar t_n\rpar_{n\in\N}}
\def\sseq{\lpar s_n\rpar_{n\in\N}}
\def\tsubseq{\lpar t_{N_k}\rpar_{k\in\N}}
\def\ssubseq{\lpar s_{N_k}\rpar_{k\in\N}}

\def\Int{\displaystyle\int}

\def\Star{{\bf($\star$)}}

\def\Star{{\bf($\star$)}}

\def\tendsto{\rightarrow}

\def\txthalf{{\textstyle\frac{1}{2}}}

\def\alim{\displaystyle\lim_{n\rightarrow\infty}a_n}

\def\seqlimOp{\displaystyle\lim_{n\rightarrow\infty}}
\def\seqlimOpIII{\displaystyle\lim_{k\rightarrow\infty}}

\def\allParts{\mathscr{P}}
\def\opCover{\mathscr{C}}

\def\Property{\mathcal{P}}

\def\boundseqlimPrin{{\bf(E1)}}

\def\seqsublimPrin{{\bf(E4)}}
\def\monoseqPrinII{{\bf(E2)}}
\def\subseqInd{{\bf(E3)}}

\def\xclimOp{{\displaystyle\lim_{x\tendsto c}}}

\begin{document}

\hrule \vspace{3pt}\medskip
\noindent{ {\large\bf Compactness Arguments in Real Analysis} \hfill\\ by: \emph{Rafael Cantuba}\footnote{\label{dms}Associate Professor, Department of Mathematics and Statistics,  De La Salle University, 2401 Taft Ave., Malate, Manila 1004, Philippines, ORCID: 0000-0002-4685-8761}} \medskip\hrule\ 

\begin{quote}  \textsc{{Abstract}}. Theorems crucial in elementary real function theory have proofs in which compactness arguments are used. Despite the introduction in relatively recent literature of each new highly elegant compactness argument, or of an equivalent, this work is based on the idea that, with the aid of simple notions such as local properties of continuous or of differentiable functions, suprema, nested intervals, convergent subsequences or the simplest form of the Heine-Borel Theorem, the use of one of four simple types of compactness arguments, suffices, and the resulting development of real function theory need not involve notions more sophisticated than what immediately follows from the usual ordering of the real numbers. Thus, four independent approaches are presented, one for each type of compactness argument: supremum arguments, nested interval arguments, Heine-Borel arguments and sequential compactness arguments.\\

\textsc{Mathematics Subject Classification (2020)}: 00A35, 26A06, 26-01

\textsc{Keywords}: real analysis, calculus, least upper bound property, completeness axiom, supremum, compactness argument, nested intervals, convergent sequence, Heine-Borel Theorem, Bolzano-Weierstrass theorem, real function theory
\end{quote}

\tableofcontents

\section*{\normalsize Preface}\addcontentsline{toc}{section}{Preface} There are many ``forms of calculus'' that can be done on functions of varying types, but the first that can be learned is the calculus of real-valued functions of a real variable. This is the content of introductory calculus texts. The theory behind this type of calculus, rightfully termed \emph{real function theory}, is more popularly known as \emph{real analysis}. The reputation of this subject among beginning graduate students, or advanced undergraduates, has with it some notoriety. From this author's perspective, it all has to do with ``mathematical sophistication'' which we define here as that perspective in higher mathematics in which objects being studied may be so simple, yet the attack is from a higher level of reasoning or treatment. Indeed, as one begins real analysis, one encounters only simple constructs, like open intervals, order axioms, or continuous functions, but the difficulty is there. The student may be considered equipped when facility has been gained on epsilon-delta arguments (sometimes only ``epsilon arguments'' or ``delta arguments'' depending on which topic one is dealing with, but let us put them all in one category). This brings the student far into the subject. Many notions and proofs may indeed be ``conquered'' by proper understanding of nested quantifiers on the epsilons and deltas. 

That is, until one encounters proof of the Heine-Borel theorem. In this day and age, chances are many published works have already tidied up a proof of the Heine-Borel theorem by abstract means: perhaps topology, or some other elegant (but non-elementary) construct. In fact, this has most probably been done in other parts of analysis: new objects defined in order to tidy up proofs. There might be a new type of integral introduced in some book or paper, or some type of interval subdivisions, with element selections, for a closed an bounded interval, or perhaps some type of proving on a closed and bounded interval that mimics mathematical induction, but in a continuous rather than a discrete fashion. This author bets on the possibility that in the next two to four decades, the integrals will still be Riemann in the undergraduate and Lebesgue in the graduate levels. The exotic types of real analysis notions, recently introduced, that are claimed to be ``better'' would have been forgotten, and perhaps so are the other artificial constructs for interval subdivisions, or the ``novel'' proving techniques. Things would eventually go back to simple suprema, sequences or nested intervals. 

Going back to the proof of the Heine-Borel Theroem, let us consider that proof in the famous Royden book: a set of elements $x\in\abClosed$ is defined satisfying certain properties. The least upper bound or supremum $c$ of this set has been identified and is in the same interval $\abClosed$. There is an argument about some quantity getting past $c$ that produces as contradiction, and $c$ is proven to be actually the right enpoint of $\abClosed$. Then another argument is made about what happens to the left of the supremum, and some other conclusion is obtained. This is no simple situation, and even worse, being equipped with mastery of epsilon-delta arguments does not seem enough anymore. As shall be discussed in the book, there is actually method in the madness: \emph{compactness arguments}. The aforementioned style in the proof of the Heine-Borel Theorem falls under this category. Together with epsilon-delta arguments, the student shall now be better equipped. We show how far epsilon-delta arguments and compactness arguments bring us into understanding real function theory. To clarify, we shall not be repeating what most traditional analysis texts contain. Instead, we focus on very specific, but fundamental, aspects of the theory. 

\section{\normalsize Introduction}

Proofs of the \emph{Intermediate Value Theorem} and the \emph{Extreme Value Theorem}, arguably, rarely appear in the usual undergraduate calculus text, and so, the book \cite{sal07} by Salas, Hille and Etgen may be considered a rarity. Upon consideration of more ``advanced'' calculus or ``elementary'' real analysis texts, or even relatively recent published works, in which said theorems are proven, such proofs may come with more ``sophisticated'' machinery: tagged partitions \cite{bar11,gor98}, real induction \cite{cla19}, continuity induction \cite{hat11}, or ``local-global'' principles \cite{rio18}. Our point is not to oppose the use of these techniques, for each is indeed an elegant approach to the subject. If, however, one is to encounter any of these techniques for the first time, with the intent of eventually adopting it in studying, teaching or even researching about real analysis, in our opinion, there seems to be a lack of heuristics. For instance, the induction steps in real induction or continuity induction are loaded with inequalities or conditions about intervals such that the resulting statements do not have the feel of usual real analysis, or in another example, the notion of tagged partitions involve ordered pairs or sets (subintervals) and elements, and this kind of mathematical construct, although very attractive from perhaps a set-theoretic or even an algebraic perspective, is still too detached from the basic properties of the complete ordered field. In this respect, the book \cite{sal07} becomes even more special. In \cite[Proofs of Lemmas B.1.1,B.2.1]{sal07}, the Intermediate and Extreme Value Theorems were proven with such simplicity that only facts immediate from the usual ordering of the real numbers are used. The technique is folklore and is not really given a name, except in \cite{tho07}, in which the technique is called a ``Sup/Inf Argument,'' simply because the supremum (or infimum) of a nonempty subset of a closed and bounded interval is used to proceed with some arguments by contradiction until the desired conclusion is obtained. The least upper bound property of the real field is immediately equivalent to the dual, which may be informally termed here as the ``greatest lower bound property,'' and so proofs via the use of suprema conceivably have counterparts that make use of infima. We will show that the use of suprema suffices. Thus, we only need to choose one from ``Sup/Inf,'' and we choose ``Sup.'' This is what we mean by a \emph{supremum argument}, and this is one type of a \emph{compactness argument} \cite[Section 2]{tho07}, which is a reference to how these argument forms are manifestations of the compactness of a closed and bounded interval, or of the import of the Heine-Borel Theorem. We clarify here, however, that in order to successfully carry out a compactness argument, there is no need to introduce the full gamut of topological notions that lead to compactness. For instance, the use of a supremum argument may only require the least upper bound property of the real field. As explored in \cite[Section 2]{tho07}, compactness arguments are associated with varying degrees of sophistication, and the higher the level of machinery is, in our opinion, the less likely it is to persist or even survive in the collective consciousness of those who deal with higher mathematics. The need for heuristics, in our opinion, is an important factor in the determination of which technique or approach shall eventually be adopted by future students and teachers of pure mathematics.

A natural point of inquiry is about which fundamental real analysis theorems may be proven using supremum arguments. We shall be considering the following.

\begin{enumerate}\item[\BVT] \emph{Bounded Value Theorem.} If $f$ is continuous on $\abClosed$, then there exists $M>0$ such that $a\leq x\leq b$ implies $f(x)\leq M$.
\item[\IVT] \emph{Intermediate Value Theorem.} If $f$ is continuous on $\abClosed$ and if $f(a)<0<f(b)$, then there exists $c\in\abOpen$ such that $f(c)=0$.
\item[\UCT] \emph{Uniform Continuity Theorem.} If $f$ is continuous on $\abClosed$, then $f$ is uniformly continuous.
\item[\DIT] \emph{Darboux Integrability Theorem.} If $f$ is continuous on $\abClosed$, then $f$ is (Darboux) integrable over $\abClosed$.
\item[\SIFT] \emph{Strictly Increasing Function Theorem.} If $f'$ is positive on $\abOpen$, then $f$ is strictly increasing on $\abOpen$.
\item[\IFT] \emph{Increasing Function Theorem.} If $f'$ is nonnegative on $\abClosed$, then $f$ is increasing on $\abClosed$.
\item[\MVI] \emph{Mean Value Inequality.} If $f'$ is bounded above on $\abClosed$ by $M>0$, then $a\leq x_1< x_2\leq b$ implies $f(x_2)-f(x_1)\leq M(x_2-x_1)$.
\item[\CFT] \emph{Constant Function Theorem.} If $f'$ is zero on $\abClosed$, then $f$ is constant on $\abClosed$.
\item[\intI] A closed and bounded interval is connected.
\item[\intII] A closed and bounded interval is compact.
\end{enumerate}

A well-known fact is that the above list is not an independent set of statements. One may choose a few of them from which the rest will follow. Our point here is to exhibit the significance of the use of supremum arguments in the sense that, via this technique, \emph{all} theorems in the above list may be proven directly from the least upper bound property of the real field. A ``designer'' of a real analysis course, then, has the flexibility of choosing a sufficient number of the above statements, prove them via supremum arguments, then prove the rest using the traditional proofs. The exact dependence of some of the above statements to others from the same list shall be discussed shortly.

We give some remarks on how the above theorems are stated. The statement \BVT\  has a very important consequence, which is the following.
\begin{enumerate}
\item[\EVT] \emph{Extreme Value Theorem.} If $f$ is continuous on $\abClosed$, then there exists $c\in\abClosed$ such that $a\leq x\leq b$ implies $f(x)\leq f(c)$.
\end{enumerate}
The above statement of the \EVT\  should be for a ``Maximum Value Theorem,'' for the ``Extreme Value'' version should have a conclusion that states ``there exist $c_1,c_2\in\abClosed$ such that $a\leq x\leq b$ implies $f(c_1)\leq f(x)\leq f(c_2)$.'' The first part concerning $c_1$ follows from the version with only $c_2$ since the continuity of $f$ implies the continuity of $-f$, so the part concerning $c_2$ is sufficient. We decided to retain \EVT\  for it is in wider usage. In the usual treatment of a first course in advanced real function theory, the \EVT\  is preceded by a weaker statement, the \BVT. In some versions, we have ``bounded'' instead of only ``bounded above,'' or having ``$|f(x)|\leq M$'' instead of ``$f(x)\leq M$'' at the end of the statement of the \BVT. The former clearly implies the latter, while the converse may be proven true using the fact that the continuity of $f$ implies the continuity of $-f$. For the \EVT, a traditional approach is to prove the \BVT\  first, directly from the least upper bound property, then the continuity of some reciprocal function is used to prove the \EVT\  \cite[p. A-9]{sal07}. Clearly, \EVT\  implies \BVT, but the \EVT\  is equivalent to the completeness axiom, while the \BVT\  is not \cite[p. 271]{dev14}. The reason for the latter is that an additional condition called ``countable cofinality'' has to be true for an ordered field $\F$, that satisfies the \BVT\  as an axiom, for $\F$ to be complete. Equivalence of real analysis theorems to the completeness of the real field is in itself of considerable interest. See \cite{can24,dev14,pro13,tei13}. The \EVT\  is not included in our list of target theorems \BVT--\intII\  because even if a local version of the \EVT\  may be stated and used in the four proving techniques to be presented, the purpose of proving, via compactness, a global property from a local property will be defeated. This is because in the aforementioned technique of using a reciprocal function to prove \EVT\ from \BVT, compactness is already needed to establish the local form of \EVT, or that the local form for \EVT\  is not provable using elementary epsilon-delta arguments.

There are versions of the \IVT\  in which the hypothesis has the condition ``$f(a)<k<f(b)$ or\linebreak $f(b)<k<f(a)$,'' instead of only ``$f(a)<0<f(b)$,'' and the corresponding conclusion has ``$f(c)=k$'' instead of ``$f(c)=0$.'' The latter case is sufficient, for the former follows because the continuity of $f$ implies the continuity of the functions $x\mapsto f(x)-k$ and $x\mapsto k-f(x)$.

\subsection{\normalsize Significance and interrelationships of real analysis theorems}\label{buildSec} The importance of the \UCT\  in the traditional development of elementary real analysis is because of its consequence, the \DIT. See, for instance, \cite[Theorem B.4.6]{sal07}. As will be shown, the \DIT\  may be proven independent of the \UCT, but at least in this author's opinion, this is not enough reason for the \UCT\  to be thrown out when a real analysis course is designed. If the provability of the \DIT\  immediately from the least upper bound property of the set $\R$ of all real numbers is the main consideration, then all preliminary notions to integration would have to be introduced early in the development of the theory. This is not needed, however, if the \UCT\  is still to be used because it only requires continuity of a function on a closed and bounded interval. At this point, clarifying our use of ``integrability'' and ``integral'' seems to be most appropriate, and this is in accordance to \cite{dev14,olm73}. Throughout, the notions of integrability and integral shall be Darboux, not Riemann, not McShane, not Denjoy, not Perron, not Henstock-Kurzweil: just Darboux. A function $\varphi:\abClosed\into\R$ is a \emph{step function} if there exists a partition of $\abClosed$, with the elements ordered as $a=x_0<x_1<\cdots<x_n=b$, such that for each $k\in\{1,2,\ldots,n\}$, $\left.\varphi\right|_{\lpar x_{k-1},x_k\rpar}$ is a constant function. (The notation used here is the standard one for the \emph{restriction} of a function on a subset of its domain.) Thus, the image, under $\varphi$, of the interval $\lpar x_{k-1},x_k\rpar$ has exactly one element, say $c_k$. In such a case, we define $\int_a^b\varphi:=\sum_{k=1}^nc_k(x_k-x_{k-1})$, and this is what we mean by the \emph{integral of a step function}. A routine proof may be used to show that the integral of a step function is independent of the choice of partition. A real-valued function $f$, with a domain that contains $\abClosed$, is \emph{integrable over $\abClosed$} if, for each $\varepsilon>0$, there exist step functions $\varphi,\psi:\abClosed\into\R$ that satisfy $\varphi\leq \left.f\right|_{\abClosed}\leq \psi$ and $\int_a^b\psi-\int_a^b\varphi<\varepsilon$. We say that $f$ \emph{has an integral over $\abClosed$} if there exists a unique\linebreak $\int_a^bf\in\R$ such that for any step functions $\varphi,\psi:\abClosed\into\R$, if $\varphi\leq \left.f\right|_{\abClosed}\leq \psi$, then we have\linebreak $\int_a^b\varphi\leq \int_a^bf\leq \int_a^b\psi$. In such a case, the real number $\int_a^bf$ is called the \emph{integral} of $f$ over $\abClosed$. Using these definitions, a routine proof of $\int_a^af=0$ may be made. One characterization of the completeness (plus countable cofinality) of an ordered field is called the ``Darboux Integral Property'' \cite[p. 271]{dev14} the difference of which, to our statement of \DIT\  above, is that the conclusion is ``$f$ has a (Darboux) integral over $\abClosed$,'' instead of  ``$f$ is (Darboux) integrable over $\abClosed$.'' The former clearly implies the latter, while the converse is true in the complete ordered field, and this is a property called the ``Integral Equivalence Property'' \cite[p. 271]{dev14}. In \cite[Theorem 4]{cla19}, the \DIT\  was proven using the technique of real induction, and this approach, according to \cite[p. 141]{cla19} is novel. To this we totally agree. But then, the thought process in the proof of \cite[Theorem 4]{cla19} very much hints to the possibility of proving the \DIT\  immediately from the least upper bound property. In this work, we show that this is indeed true for the \DIT, alongside other important real function theorems.

In \cite{ber67}, there is a proof of the \SIFT\  using a supremum argument, and the discussion in the one-page paper goes on to present a proof that $\mSIFT\implies\mIFT\implies\mMVI$, while from \cite[Theorem 1(c)--(e)]{tuc97}, one may find a proof that $\mIFT\implies\mSIFT\implies\mMVI$. Meanwhile, the \IFT\  may be easily proven as a consequence of the \MVI: given $\varepsilon>0$, if $f'$ is nonnegative on the nondegenerate interval with left endpoint $a$ and right endpoint $b$, then $-f'$ is bounded above by $\frac{\varepsilon}{b-a}$, and the conclusion of the \IFT\  follows by routine calculations. In both sources \cite{ber67,tuc97}, there are proofs that the \CFT\  is a consequence of either \SIFT\  or \IFT, while an independent proof of \CFT\  using an ``infimum argument'' may be found in \cite{pow63}. Regarding the names \SIFT\  and \IFT, our usage of ``strictly increasing'' and ``increasing'' is consistent with that in \cite[p. 231]{tuc97}. That is, given a function $f$, the domain of which contains an interval $I$ with endpoints $a$ and $b$, we say that $f$ is \emph{strictly increasing} on $I$ if $a\leq x_1<x_2\leq b$ implies $f(x_1)<f(x_2)$, while $f$ is \emph{increasing} on $I$ if $a\leq x_1<x_2\leq b$ implies $f(x_1)\leq f(x_2)$. The relationship of the \SIFT, \IFT\  and \MVI\  to the Mean Value Theorem, which according to \cite[p. 174]{bar11}, is the ``Fundamental Theorem of Differential Calculus,'' is interesting, if not actually intriguing. There is considerable literature on this topic, but the interested reader can get started with \cite{ber67,coh67,tuc97}. The \CFT, of course, is very important in real function theory. For instance, the \CFT\  may be used to show that one form of the Fundamental Theorem of Calculus follows from the other form, where the latter is provable via the Mean Value Theorem for Integrals, which in turn may be proven using the \EVT\  and \IVT. The \SIFT\  may be used to establish the Inverse Function Theorem, which may be used to analytically define the elementary transcendental functions, and the \CFT\  may be used in calculus proofs of identities for these transcendental functions. Given the aforementioned dependencies between the \SIFT, \IFT, \MVI\  and \CFT, we still decided to include all of them in the list mainly to further illustrate the merits of choosing supremum arguments for their proofs. The designer of a real analysis course can simply choose one among the \SIFT, \IFT\  or \MVI\  as a starting point, and then make use of the logical dependencies just described. However, all four theorems may be proven independently using a supremum argument, as will be shown. 

The three basic theorems on continuous functions on which ``the rest of calculus depends'' are, according to \cite[p. 147]{mun00}, the \IVT, \EVT\  and \UCT. By ``the rest of calculus'' we mean here that portion of real function theory up to the point where the two forms of the Fundamental Theorem of Calculus are established (and their immediate consequences, such as Change of Variable), and up to the point where elementary transcendental functions can be analytically defined. This is what roughly constitutes the most elementary core of real function theory, as may be seen in the classic \cite[Chapter 2]{bis67}, but of course, our perspective is more elementary, and is definitely not constructive. Based on the previous discussion, indeed, the \IVT, \EVT\  and \UCT\  suffice. These three ``pillar'' theorems may be given very elegant ``topological proofs,'' but in applying these to the specific setting of calculus, the statements \intI\  and \intII\  are needed, and hence, we have them on our list. They may be proven directly from the completeness of the real field via supremum arguments. Suggestively, if the designer of a real analysis course does not choose to start with the concrete setting of supremum arguments, and instead, opts for a more abstract setting that is topological, then the statements \intI\  and \intII\  may simply be invoked later to show that what was proven in the topological setting descends into the more concrete versions of the three pillar theorems. 

\subsection{\normalsize Restatement of the real analysis theorems}\label{supargSec} Some of the real analysis theorems we just discussed will now be further restated in order to have better suitability for proofs using supremum arguments. For convenience, we exhibit the full list of theorems, even those that will not be further restated.

\begin{enumerate}\item[\BVT] If $f$ is continuous on $\abClosed$, then there exists $M>0$ such that $a\leq x\leq b$ implies $f(x)\leq M$.
\item[\IVTp] If $f$ is continuous on $\abClosed$, negative at $a$, and nonzero on $\abOpen$, then $f$ is negative on $\abClosed$.
\item[\UCT] If $f$ is continuous on $\abClosed$, then $f$ is uniformly continuous.
\item[\DITp] If $f$ is continuous on $\abClosed$, then for each $\varepsilon>0$, there exists a partition $\Delta$ of $\abClosed$ such that\linebreak $\upDar_f(\Delta)-\lowDar_f(\Delta)<\varepsilon$.
\item[\SIFTp] If $f'$ is positive on $\abOpen$, then $a< t< b$ implies $f(a)<f(t)$.
\item[\IFTp] If $f'$ is nonnegative on $\abClosed$, then $a\leq t\leq b$ implies $f(a)\leq f(t)$.
\item[\MVIp] If $f'$ is bounded above on $\abClosed$ by $M>0$, then $a\leq t\leq b$ implies $f(t)-f(a)\leq M(t-a)$.
\item[\CFTp] If $f'$ is zero on $\abClosed$, then $a\leq t\leq b$ implies $f(t)=f(a)$.
\item[\intIp] If $U\sub\abClosed$ contains $a$ and is both open and closed relative to $\abClosed$, then $\abClosed\sub U$.
\item[\intII] A closed and bounded interval is compact.
\end{enumerate}

By stating these theorems as such, the method via supremum arguments may be easily generalized, as will be shown. The new statements \IVTp, \DITp, \SIFTp, \IFTp, \MVIp, \CFTp\  and \intIp\  are sufficient conditions for \IVT, \DIT, \SIFT, \IFT, \MVI, \CFT\  and \intI, respectively. 

Suppose \IVTp\  is true, and to prove the \IVT, suppose $f$ is continuous on $\abClosed$ with $f(a)<0<f(b)$. If $f$ is nonzero on $\abOpen$, then by \IFTp, $f$ is negative at $b$, contradicting $0<f(b)$. Therefore, there exists $c\in\abOpen$ at which, $f$ is zero. 

If \DITp\  is true and $f$ is continuous on $\abClosed$, then by a routine property of upper and lower Darboux sums, there exist step functions $\varphi,\psi:\abClosed\into\R$ such that $\varphi\leq\left. f\right|_{\abClosed}\leq\psi$ and that $\int_a^b\varphi=\lowDar_f\lpar\Delta\rpar$, which is a lower Darboux sum, and $\int_a^b\psi=\upDar_f\lpar\Delta\rpar$, which is an upper Darboux sum. When these are substituted to the inequality in \DITp, the result is a proof that $f$ is integrable over $\abClosed$. 

Suppose \MVIp\  is true and that $f'$ is bounded above by $M>0$ on $\abClosed$. In particular, given\linebreak $x_1,x_2\in\abClosed$ with $x_1< x_2$, $f'$ is bounded above by $M$ on $\lbrak x_1,x_2\rbrak\sub\abClosed$, so 
\begin{eqnarray}
x_1\leq t\leq x_2\implies f(t)-f(x_1)\leq M(t-x_1),\nonumber
\end{eqnarray}
and by setting $t=x_2$, we have $f(x_2)-f(x_1)\leq M(x_2-x_1)$, which proves \MVI. Analogous argumentation may be used to show $\mSIFTp\implies\SIFT$ and $\mIFTp\implies\IFT$. The implication\linebreak $\mCFTp\implies\CFT$ is trivial. 

Suppose \intIp\  is true, but, tending towards a contradiction, suppose there is an interval $\abClosed$ that is not connected. This means that there exist disjoint nonempty subsets $U$ and $V$ of $\abClosed$, both open relative to $\abClosed$, such that $U\cup V=\abClosed$. Without loss of generality, $a\in U$ may be assumed. From the previous set equality, we find that $U=\abClosed\setdiff V$ is closed relative to $\abClosed$, and by \intIp, $\abClosed\sub U$. By routine set-theoretic arguments, $V=\emptyset$.$\lightning$\  This proves $\mintIp\implies\mintI$.

\subsection{\normalsize Other simple compactness arguments}

If the use of supremum arguments, to prove the aforementioned pillar theorems of real analysis, may seem too one-sided, we present in this book three other approaches so that, perhaps, the designer of a real analysis course can have other options. If the use of only inequalities, suprema, and transitivity properties is not to the liking of the course designer, then we present here three alternatives: \emph{nested interval arguments}, \emph{Heine-Borel arguments} and \emph{sequential compactness arguments}. As mentioned in the introduction, the reference \cite{tho07} contains a good list of compactness arguments. Our choice of the four that shall be covered in this book is based on simplicity, dependence on elementary mathematics (inequalities, quantifiers from logic, sequences), and not on higher constructs like those discussed in \cite{tho07} for more ``advanced'' approaches (Lebesgue chains, covering relations, Cousin covers). The method using Heine-Borel arguments requires familiarity with some topological notions, but only the basic ones, and the reader will see that the method follows naturally when one has understood supremum arguments and nested interval arguments. 

We are still presenting these other approaches with the goal of proving all of \BVT--\intII\  using a unified approach: using only nested interval arguments in Chapter~\ref{NestCh}, and the proving all in the list again, but using only Heine-Borel arguments in Chapter~\ref{HeineCh}. The generalized sequential proof presented in Chapter~\ref{NewSeqSec} is based only on a slight modification to the generalized proof via supremum arguments. 


\section{\normalsize Epsilon-Delta Arguments}\label{EpDelCh}

True understanding of analysis springs forth from one important seed: the \emph{epsilon-delta definition of limit}. Given $c,L\in\R$ and a real valued function $f$ with a domain that contains a deleted neighborhood of $c$ (that is, a set $I\setdiff\{c\}$ for some open interval $I$ that contains $c$ or has $c$ as an endpoint), the condition ``$f(x)\tendsto L$ as $x\tendsto c$'' is defined to be ``for each $\varepsilon>0$, there exists $\delta>0$ such that $0<|x-c|<\delta$ implies $|f(x)-f(c)|<\varepsilon$. If this is indeed the case, then $L$ is unique, and is denoted by $\xclimOp f(x)$. The uniqueness proof involves the ``epsilon-over-two'' technique: If $f(x)\tendsto L$ and also $f(x)\tendsto K$ as $x\tendsto c$, then, given $\varepsilon>0$, produce two deltas, and if $\delta$ is the smaller one, take any $x$ that satisfies $0<|x-c|<\delta$ and routine arguments using the Triangle Inequality should lead to $|L-K|<\varepsilon$. The ``epsilon principle'' asserts that if, for all $\varepsilon>0$, we have $|L-K|<\varepsilon$, then $L=K$. This is a consequence of the Trichotomy Law for the usual ordering in $\R$. Hence, the uniqueness proof is accomplished. \emph{Continuity at $c$} may be defined as $\xclimOp f(x)=f(c)$, and \emph{differentiability at $c$} may be defined as the existence of $f'(c):=\xclimOp\frac{f(x)-f(c)}{x-c}$. The uniqueness of limit implies that the rule of assignment $c\mapsto f'(c)$ is a function, which we call the \emph{derivative} of $f$. There are many other ``epsilon-delta arguments'' that a student of real analysis can practice with, and these involve proofs for laws of limits, operations on continuous functions, on derivatives, and so on. In this section, we list all the statements, provable by epsilon-delta arguments that shall be needed in the book.

A proof of a real analysis theorem by supremum arguments, which we shall introduce in Chapter~\ref{SupCh}, involves statements, which are consequences of continuity, differentiability or some topological property of $\abClosed$ that are provable by epsilon-delta arguments. We shall be needing the following.

\begin{lemma}\label{edLem} Given a real-valued function $f$ defined on $\abClosed$, given $c\in\abOpen$, and given $M>0$, the following are true.
\begin{enumerate}\item[\edBVT] If $f$ is continuous at $c$, then there exists $\delta>0$ small enough so that $a<c-\delta<c<c+\delta<b$, and that $f$ is bounded above on $\lbrak c-\delta,c+\delta\rbrak$.
\item[\edIVT] If $f$ is continuous and nonzero at $c$, then there exists $\delta>0$ small enough so that {\small$a<c-\delta<c<c+\delta<b$}, and that, for each $t\in\lbrak c-\delta,c+\delta\rbrak$, the numbers $f(c)$ and $f(t)$ have the same sign.
\item[\edUCT] Let $\varepsilon>0$. If $f$ is continuous at $c$, then there exists $\delta>0$ small enough so that $a<c-\delta<c<c+\delta<b$, and that $s,t\in\lbrak c-\delta, c+\delta\rbrak$ implies $|f(s)-f(t)|<\varepsilon$.
\item[\edDITp] Let $\varepsilon>0$. If $f$ is continuous at $c$, then if $\eta:=\frac{\varepsilon}{2(b-a)}$, then there exists $\delta>0$ small enough so that $a<c-\delta<c<c+\delta<b$, and that $c-\delta<t\leq c$ implies
\begin{eqnarray}
\upDar_f\lpar\{t,c+\delta\}\rpar-\lowDar_f\lpar\{t,c+\delta\}\rpar\leq (c+\delta-t)\eta.\nonumber
\end{eqnarray}
\item[\edSIFTp] If $f'(c)>0$, then there exists $\delta>0$ small enough so that $a<c-\delta<c<c+\delta<b$, and that $c-\delta\leq s<t\leq c+\delta$ implies $f(s)<f(t)$.
\item[\edIFTp] If $f'(c)\geq 0$, then there exists $\delta>0$ small enough so that $a<c-\delta<c<c+\delta<b$, and that $c-\delta\leq s<t\leq c+\delta$ implies $f(s)\leq f(t)$.
\item[\edMVIp] If $f'(c)\leq M$, then there exists $\delta>0$ small enough so that $a<c-\delta<c<c+\delta<b$, and that $c-\delta\leq s<t\leq c+\delta$ implies $f(t)-f(s)\leq M(t-s)$.
\item[\edCFTp] If $f'(c)=0$, then there exists $\delta>0$ small enough so that $a<c-\delta<c<c+\delta<b$, and that $s,t\in\lbrak c-\delta, c+\delta\rbrak$ implies $f(s)=f(t)$.
\item[\edintIp] Suppose $U\sub\abClosed$ contains $a$, and is both open and closed relative to $\abClosed$. If $c\in U$, then there exists $\delta>0$ small enough so that $a<c-\delta<c<c+\delta<b$, and that $\lbrak c-\delta, c+\delta\rbrak\sub U$.
\item[\edintII] Suppose $\opCover$ is an open cover of $\abClosed$, and that $Q\in\opCover$. If $c\in Q$, then there exists $\delta>0$ small enough so that $a<c-\delta<c<c+\delta<b$, and that $\lbrak c-\delta, c+\delta\rbrak\sub Q$.
\end{enumerate}
\end{lemma}

The epsilon-delta proofs for \edBVT--\edintII\  are left to the reader. The fact that $a<c<b$ is important for the required $\delta$ to be ``small enough." Strictly speaking, \edintIp, \edintII\  are only ``delta arguments'' but for the sake of uniformity, we named all assertions in Lemma~\ref{edLem} as \edX. At some point in carrying out supremum arguments, we shall be needing analogs of \edBVT--\edintII\  in which the continuity, or differentiability, of $f$ at $b$ (instead of some $c\in\abOpen$) is needed. The epsilon-delta proofs are very much similar, and we state them in the following without proof.

\begin{lemma}\label{edLem2} Given a real-valued function $f$ defined on $\abClosed$ (with $a<b$), and given $M>0$, the following are true.
\begin{enumerate}\item[\ediiBVT] If $f$ is continuous at $b$, then there exists $\delta>0$ small enough so that $a<b-\delta<b$, and that $f$ is bounded above on $\lbrak b-\delta,b\rbrak$.
\item[\ediiEVT] If $f$ is continuous at $b$, then there exists $\delta>0$ small enough so that $a<b-\delta<b$, and that there exists $k\in\lbrak a,b\rbrak$ such that $b-\delta\leq t\leq b$ implies $f(t)\leq f(k)$.
\item[\ediiUCT] Let $\varepsilon>0$. If $f$ is continuous at $b$, then there exists $\delta>0$ small enough so that $a<b-\delta<b$, and that $s,t\in\lbrak b-\delta, b\rbrak$ implies $|f(s)-f(t)|<\varepsilon$.
\item[\ediiDITp] Let $\varepsilon>0$. If $f$ is continuous at $b$, then if $\eta:=\frac{\varepsilon}{2(b-a)}$, then there exists $\delta>0$ small enough so that $a<b-\delta<b$, and that $b-\delta<t\leq b$ implies
\begin{eqnarray}
\upDar_f\lpar\{t,b\}\rpar-\lowDar_f\lpar\{t,b\}\rpar\leq (b-t)\eta.\nonumber
\end{eqnarray}
\item[\ediiSIFTp] If $f'(b)>0$, then there exists $\delta>0$ small enough so that $a<b-\delta<b$, and that $b-\delta\leq s<t\leq b$ implies $f(s)<f(t)$.
\item[\ediiIFTp] If $f'(b)\geq 0$, then there exists $\delta>0$ small enough so that $a<b-\delta<b$, and that $b-\delta\leq s<t\leq b$ implies $f(s)\leq f(t)$.
\item[\ediiMVIp] If $f'(b)\leq M$, then there exists $\delta>0$ small enough so that $a<b-\delta<b$, and that $b-\delta\leq s<t\leq b$ implies $f(t)-f(s)\leq M(t-s)$.
\item[\ediiCFTp] If $f'(b)=0$, then there exists $\delta>0$ small enough so that $a<b-\delta<b$, and that $s,t\in\lbrak b-\delta, b\rbrak$ implies $f(s)=f(t)$.
\item[\ediiintIp] Suppose $U\sub\abClosed$ contains $a$, and is both open and closed relative to $\abClosed$. If $b\in U$, then there exists $\delta>0$ small enough so that $a<b-\delta<b$, and that $\lbrak b-\delta, b\rbrak\sub U$.
\item[\ediiintII] Suppose $\opCover$ is an open cover of $\abClosed$, and that $Q\in\opCover$. If $b\in Q$, then there exists $\delta>0$ small enough so that $a<b-\delta<b$, and that $\lbrak b-\delta, b\rbrak\sub Q$.
\end{enumerate}
\end{lemma}

The statements \edBVT--\ediiintII\  are all the epsilon-delta statements that are needed for supremum arguments. For nested interval arguments, which we shall introduce in Chapter~\ref{NestCh}, we shall be needing analogs of \ediiBVT--\ediiintII\  in which the continuity, or differentiability, of $f$ at the left endpoint $a$ (instead of the right endpoint $b$ or some $c\in\abOpen$) is needed.

\begin{lemma}\label{edLem3} Given a real-valued function $f$ defined on $\abClosed$ (with $a<b$), and given $M>0$, the following are true.
\begin{enumerate}\item[\ediiiBVT] If $f$ is continuous at $a$, then there exists $\delta>0$ small enough so that $a<a+\delta<b$, and that $f$ is bounded above on $\lbrak a,a+\delta\rbrak$.
\item[\ediiiIVT] If $f$ is continuous and nonzero at $a$, then there exists $\delta>0$ small enough so that $a<a+\delta<b$, and that, for each $t\in\lbrak a,a+\delta\rbrak$, the numbers $f(a)$ and $f(t)$ have the same sign.
\item[\ediiiUCT] Let $\varepsilon>0$. If $f$ is continuous at $a$, then there exists $\delta>0$ small enough so that $a<a+\delta<b$, and that $s,t\in\lbrak a,a+\delta\rbrak$ implies $|f(s)-f(t)|<\varepsilon$.
\item[\ediiiDITp] Let $\varepsilon>0$. If $f$ is continuous at $a$, then if $\eta:=\frac{\varepsilon}{2(b-a)}$, then there exists $\delta>0$ small enough so that $a<a+\delta<b$, and that $a\leq t<a+\delta$ implies
\begin{eqnarray}
\upDar_f\lpar\{a,t\}\rpar-\lowDar_f\lpar\{a,t\}\rpar\leq (t-a)\eta.\nonumber
\end{eqnarray}
\item[\ediiiSIFTp] If $f'(a)>0$, then there exists $\delta>0$ small enough so that $a<a+\delta<b$, and that $a\leq s<t\leq a+\delta$ implies $f(s)<f(t)$.
\item[\ediiiIFTp] If $f'(a)\geq 0$, then there exists $\delta>0$ small enough so that $a<a+\delta<b$, and that $a\leq s<t\leq a+\delta$ implies $f(s)\leq f(t)$.
\item[\ediiiMVIp] If $f'(a)\leq M$, then there exists $\delta>0$ small enough so that $a<a+\delta<b$, and that $a\leq s<t\leq a+\delta$ implies $f(t)-f(s)\leq M(t-s)$.
\item[\ediiiCFTp] If $f'(a)=0$, then there exists $\delta>0$ small enough so that $a<a+\delta<b$, and that $s,t\in\lbrak a,a+\delta\rbrak$ implies $f(s)=f(t)$.
\item[\ediiiintIp] Suppose $U\sub\abClosed$ contains $b$, and is both open and closed relative to $\abClosed$. If $a\in U$, then there exists $\delta>0$ small enough so that $a<a+\delta<b$, and that $\lbrak a,a+\delta\rbrak\sub U$.
\item[\ediiiintII] Suppose $\opCover$ is an open cover of $\abClosed$, and that $Q\in\opCover$. If $a\in Q$, then there exists $\delta>0$ small enough so that $a<a+\delta<b$, and that $\lbrak a,a+\delta\rbrak\sub Q$.
\end{enumerate}
\end{lemma}

As will be needed later, the epsilon-delta statements \edBVT--\ediiiintII\  are each related to one of the theorems in Table~\ref{PropTable}. We list the correspondences in the following.

\begin{table}[H]{
\centering
\begin{tabular}{ |r|r|r|r| } 
 \hline
 Theorem & Lemma~\ref{edLem} & Lemma~\ref{edLem2}  & Lemma~\ref{edLem3} \\\hline 
 \BVT & \edBVT & \ediiBVT& \ediiiBVT \\ \hline
\IVTp & \edIVT & \ediiIVT & \ediiiIVT  \\ \hline
\UCT & \edUCT & \ediiUCT & \ediiiUCT \\ \hline
\DITp & \edDITp & \ediiDITp & \ediiiDITp  \\ \hline
\SIFTp & \edSIFTp & \ediiSIFTp & \ediiiSIFTp \\ \hline
\IFTp & \edIFTp & \ediiIFTp & \ediiiIFTp  \\ \hline
\MVIp & \edMVIp & \ediiMVIp & \ediiiMVIp  \\ \hline
\CFTp & \edCFTp & \ediiCFTp & \ediiiCFTp  \\ \hline
\intIp & \edintIp & \ediiintIp & \ediiiintIp  \\ \hline
\intII & \edintII & \ediiintII & \ediiiintII  \\ \hline
\end{tabular}\caption{Epsilon-delta statements related to the theorems \BVT--\intII}\label{edTable}
}\end{table}

\section{\normalsize Supremum Arguments}\label{SupCh}
Let $\varepsilon$ and $M$ be positive real numbers, let $U$ be a subset of $\abClosed$ that contains $a$, and let $\opCover$ be an open cover of $\abClosed$. Given $x\in\abClosed$, let $ \allParts(a,x)$ be the collection of all partitions of $\lbrak a,x\rbrak$. The conclusion of each of the theorems \BVT--\intII\  listed Section~\ref{supargSec} may be associated with a propositional function $\Property$ of two real variables, as given in the following table.

\begin{table}[H]{\footnotesize
\centering
\begin{tabular}{ |r|l| } 
 \hline
 Theorem & Propositional Function  \\\hline 
 \BVT & $\Property(a,x)\  :\  \exists M>0\quad a\leq t\leq x\implies f(t)\leq M$ \\ \hline
\IVTp & $\Property(a,x)\  :\  a\leq t\leq x\implies f(t)<0$ \\ \hline
\UCT & $\Property(a,x)=\Property_\varepsilon(a,x)\  :\  \exists \delta>0\quad\forall s,t\in\lbrak a,x\rbrak\quad |s-t|<\delta\implies|f(s)-f(t)|<\varepsilon$\!\! \\ \hline
\DITp & $\Property(a_0,x)=\Property_\varepsilon(a_0,x)\  :\  \exists\Delta\in \allParts(a_0,x)\quad \upDar_f(\Delta)-\lowDar_f(\Delta)\leq (x-a_0)\frac{\varepsilon}{2(b-a)}$ \\ \hline
\SIFTp & $\Property(a,x)\  :\  a< t< x\implies f(a)<f(t)$ \\ \hline
\IFTp & $\Property(a,x)\  :\  a\leq t\leq x\implies f(a)\leq f(t)$ \\ \hline
\MVIp & $\Property(a,x)=\Property_M(a,x)\  :\  a\leq t\leq x\implies f(t)-f(a)\leq M(t-a)$ \\ \hline
\CFTp & $\Property(a,x)\  :\  a\leq t\leq x\implies f(t)=f(a)$ \\ \hline
\intIp & $\Property(a,x)=\Property_U(a,x)\  :\  \lbrak a,x\rbrak\sub U$ \\ \hline
\intII & $\Property(a,x)=\Property_\opCover(a,x)\  :\  \exists A_1,A_2,\ldots, A_n\in\opCover\quad \lbrak a,x\rbrak\sub\bigcup_{k=1}^nA_k$ \\ \hline
\end{tabular}\caption{Propositional function $\Property$ for each of the conclusions of \BVT--\intII}\label{PropTable}
}\end{table}

\begin{proposition}\label{PaaProp} Let \Star\  be one of the theorems listed in Table~\ref{PropTable}, and let $x\in\abClosed$. If $\Property$ is the propositional function associated with the conclusion of \Star, then $\Property(a,x)$ is true if $x=a$ or if $\abClosed$ is a singleton. 
\end{proposition}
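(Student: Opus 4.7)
The plan is to verify each of the eleven entries of Table~\ref{PropTable} separately, using the observation that when $x=a$, the interval $\lbrak a,x\rbrak$ collapses to the singleton $\{a\}$. The second clause, in which $\abClosed$ is a singleton, reduces to this same case, since $b=a$ forces every $x\in\abClosed$ to equal $a$. Under this reduction, every universally-quantified $t$ satisfying $a\leq t\leq x$ is forced to be $a$, and every strict condition $a<t<x$ is vacuously empty.

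The body of the proof is then a routine case-by-case check. For \BVT, I would set $M:=|f(a)|+1$. For \EVT, take $\ell:=a$. For \IVTp, the implication reduces to the assertion $f(a)<0$, which is already part of the hypothesis of the theorem. For \UCT, any $\delta>0$ works because $s=t=a$ forces $|f(s)-f(t)|=0<\varepsilon$. The statements \SIFTp, \IFTp, \MVIp, and \CFTp are either vacuous or reduce to the reflexive inequalities $f(a)\leq f(a)$, $f(a)-f(a)\leq 0$, or $f(a)=f(a)$. For \intIp, the hypothesis $a\in U$ gives $\lbrak a,a\rbrak=\{a\}\sub U$ at once. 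For \intII, since $\opCover$ covers $\abClosed$ one may pick any $A_1\in\opCover$ with $a\in A_1$ and note that $\lbrak a,a\rbrak\sub A_1$.

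The only entry warranting a moment's thought is \DITp, where one must actually exhibit a partition $\Delta\in\allParts(a,a)$. I would take the trivial partition consisting of the single point $a$; then both $\upDar_f(\Delta)$ and $\lowDar_f(\Delta)$ are empty sums, hence equal to $0$, so the inequality $\upDar_f(\Delta)-\lowDar_f(\Delta)\leq 0\cdot\frac{\varepsilon}{2(b-a)}$ holds trivially. The only conceivable obstacle in the whole proof is a convention about admissibility of this degenerate partition; once the convention is fixed, the proposition is a straightforward enumeration of base cases, each immediate from the relevant definitions or from the theorem's own hypotheses.
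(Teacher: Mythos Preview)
Your proposal is correct and takes essentially the same approach as the paper: a case-by-case verification that each $\Property(a,a)$ holds trivially from the definitions or from the standing hypotheses. The paper's own proof is far terser---it singles out only \intIp\ (noting that $a\in U$ is assumed) and declares the remaining cases ``trivial''---so your write-up simply spells out the details the paper omits, including the choice of the degenerate partition $\{a\}$ for \DITp, which the paper likewise uses in its Appendix proof.
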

\begin{proof} If \Star\  is \intIp, then we simply note here that $U$ is assumed to contain $a$, and the proof is immediate. The remaining cases have trivial proofs.
\end{proof}

The main strategy in proving \BVT--\intII\  using a supremum argument is to make use of the set
\begin{eqnarray}
\theSet=\{x\in\abClosed\  :\  \Property(a,x)\}.\nonumber
\end{eqnarray}
In some cases, $\theSet$ may have some dependence on $\varepsilon$, $M$, $U$ or $\opCover$ because $\Property$ has such dependence. In every case, $\Property$ also depends on $f$, but, for simpler notation, this is not anymore indicated in the subscripts in Table~\ref{PropTable}. The proof shall proceed by making use of some corollaries to the least upper bound property of $\R$. We first make some remarks concerning this property.

We say that $u\in\R$ is an \emph{upper bound} of $\theSet\sub\R$ if, for any $x\in S$, we have $x\leq u$. One very important use of the notion of upper bounds is the use, in an argument, whenever applicable, of the assertion ``$u$ is NOT an upper bound of $\theSet$,'' which is equivalent to ``there exists $\xi\in\theSet$ such that $\xi{\not\leq}u$. We have, in effect, produced an element of $\theSet$ ``out of thin air,'' when no other property of the set $\theSet$ is known except for the existence of some real number that is not an upper bound. If the predicate ``not an upper bound'' is so useful in our proofs, then it shall be equally useful to have notions that shall be helpful in deriving conclusions that have this as a predicate. To this end, we have the notion of suprema. We say that $u$ is a \emph{least upper bound} or \emph{supremum} of $S\sub\R$ if $u$ is an upper bound of $\theSet$, and, for any upper bound $v$ of $\theSet$, we have $u\leq v$. If $u_1$ and $u_2$ are both suprema of $\theSet$, then the supremum $u_1$ compares with the upper bound $u_2$ according to $u_1\leq u_2$, while the upper bound $u_1$ and the supremum $u_2$ satisfy $u_2\leq u_1$. Therefore, $u_1=u_2$ and the supremum of a subset of $\R$, if it exists, is unique, which we denote by $\sup S$. Any real number $\upsilon$ less than $\sup\theSet$ is not an upper bound of $S$, and so we can use the predicate ``there exists $\xi\in\theSet$ such that $\xi{\not\leq}u$,'' to produce elements of $\theSet$ when needed. If suprema are this significant, it would be great for $\R$ to have a property we can use to invoke the existence of suprema whenever needed, and in fact, there is.

\begin{enumerate}\item[\ES]\emph{Existence of Suprema.} Any nonempty subset of $\R$ that has an upper bound has a least upper bound.
\end{enumerate}

We now state and prove corollaries to \ES\  that shall have repeated applicability in the proofs in this book, and hence we refer to them as ``principles,'' or theorems that have wide applicability in the theory.

\begin{principle}\label{ESnewPrin} If $\theSet$ is a nonempty subset of $\abClosed$, then there exists $c\in\abClosed$ such that $c=\sup\theSet$.
\end{principle}
\begin{proof} The statement \ES\  guarantees the existence of $c$ in $\R$. Since $b$ is an upper bound of $\theSet$, $c\leq b$. Since $\theSet$ has an element, say $x$, and since $a$ is a lower bound, and $c$ is an upper bound, of $\theSet$, $a\leq x\leq c\leq b$. Therefore, $c\in\abClosed$.
\end{proof}

\begin{principle}\label{singlePrin} Let $\theSet$ be a nonempty subset of $\abClosed$. If $\theSet$ is not a singleton, then $a\neq \sup\theSet$.
\end{principle}
\begin{proof} Suppose $\theSet$ is a nonempty subset of $\abClosed$. We proceed by contraposition. Given $x\in\theSet$, the upper bound $\sup\theSet$, and the lower bound $a$, of $\theSet$ are related by $a\leq x\leq \sup\theSet$, where the left-most member, if assumed to be equal to $\sup\theSet$, leads us to the conclusion that every element of $\theSet$ is equal to $\sup\theSet$.
\end{proof}

As previously explained, the predicate ``not an upper bound'' recurs in proofs by supremum arguments, so we shall be using the following.

\begin{principle}\label{notupPrin} If $c=\sup\theSet$ and $\delta>0$, then there exists $\xi\in\theSet$ such that $c-\delta<\xi\leq c$.
\end{principle}
\begin{proof} From $\delta>0$, we obtain $c-\delta<c$, so $c-\delta$ is not an upper bound of $\theSet$, which means that there exists $\xi\in\theSet$ such that $c-\delta<\xi$. Since $c$ is an upper bound of $\theSet$, $c-\delta<\xi\leq c$.
\end{proof}

A propositional function $\Property$ of two real variables shall be called \emph{transitive} if, given that $a\leq x\leq c$, the conditions $\Property(a,x)$ and $\Property(x,c)$ imply $\Property(a,c)$. 

\begin{lemma}\label{quasiLem} Let \Star\  be one of the theorems in Table~\ref{PropTable}, except \UCT\  or \SIFTp. If $\Property$ is the propositional function associated with \Star, then $\Property$ is transitive. 
\end{lemma}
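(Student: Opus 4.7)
The plan is a case-by-case verification across the nine propositional functions listed in Table~\ref{PropTable}, excluding \UCT and \SIFTp. For each such $\Property$, I would fix $a \leq x \leq c$ and assume both $\Property(a,x)$ and $\Property(x,c)$, then assemble the witness for $\Property(a,c)$ out of the witnesses supplied by the two hypotheses.

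The first group of cases can be disposed of by a trivial merger of witnesses. For \BVT, take $M := M_1 \maxOp M_2$. For \EVT, choose whichever of $\ell_1 \in \lbrak a,x\rbrak$ and $\ell_2 \in \lbrak x,c\rbrak$ gives the larger value of $f$. For \intIp, the containments $\lbrak a,x\rbrak \sub U$ and $\lbrak x,c\rbrak \sub U$ union to $\lbrak a,c\rbrak \sub U$. For \intII, the union of the two finite subcovers remains finite. For \IVTp, negativity on each half is negativity on the union.

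The second group consists of \IFTp, \MVIp, and \CFTp, in which the value of $f$ at the midpoint $x$ serves as a bridge. For \MVIp, I would write, for $t \in \lbrak x,c\rbrak$, the telescoping identity $f(t) - f(a) = [f(t) - f(x)] + [f(x) - f(a)] \leq M(t-x) + M(x-a) = M(t-a)$, where the second summand is supplied by $\Property(a,x)$ applied at $t = x$. The same bridging through $f(x)$ handles \IFTp (with $\leq$ throughout) and \CFTp (with $=$ throughout).

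The delicate case, and the one I would save for last, is \DITp; this is where the specific shape of the bound in Table~\ref{PropTable} earns its keep. Concatenating partitions $\Delta_1$ of $\lbrak a,x\rbrak$ and $\Delta_2$ of $\lbrak x,c\rbrak$ into a partition $\Delta$ of $\lbrak a,c\rbrak$, the Darboux difference splits additively as $\upDar_f(\Delta) - \lowDar_f(\Delta) = [\upDar_f(\Delta_1) - \lowDar_f(\Delta_1)] + [\upDar_f(\Delta_2) - \lowDar_f(\Delta_2)]$, while the bound itself scales linearly: $(x-a)\frac{\varepsilon}{2(b-a)} + (c-x)\frac{\varepsilon}{2(b-a)} = (c-a)\frac{\varepsilon}{2(b-a)}$. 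The coefficient in Table~\ref{PropTable} was chosen precisely so that this addition closes up, which also clarifies the exclusions: the $\varepsilon$-budget in \UCT is not additive under a triangle inequality across $x$ (two uses of $\delta$ yield $2\varepsilon$, not $\varepsilon$), and the strict inequality in \SIFTp is asserted only on the open subintervals, so the bridging value $f(a) < f(x)$ is not derivable from the hypotheses.
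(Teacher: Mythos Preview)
Your proposal is correct and follows the same case-by-case verification that the paper outlines; the paper only spells out the \DITp\ case in detail (with the same partition-concatenation and additivity of the $(x-a)\eta$ bound that you describe) and declares the remaining cases routine, which is exactly the content of your first two groups. Your closing remarks on why \UCT\ and \SIFTp\ fail transitivity are also accurate and go slightly beyond what the paper states at this point.
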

\begin{proof} We only show here a proof of the transitivity of $\Property$ when \Star\  is the \DIT. All the other cases for \Star\  involve routine proofs.

Let $\eta:=\frac{\varepsilon}{2(b-a)}$. Suppose $a\leq x\leq c$, and suppose that $\Property(a,x)$ and $\Property(x,c)$ are true. As a consequence, there exist $\Lambda\in\allParts(a,x)$ and $\Delta\in\allParts(x,c)$ such that
\begin{eqnarray}
\upDar_f\lpar\Lambda\rpar-\lowDar_f\lpar\Lambda\rpar & \leq & ( x-a)\eta,\\\label{newDIT4a}
\upDar_f\lpar\Delta\rpar-\lowDar_f\lpar\Delta\rpar & \leq & (c-x)\eta.\label{newDIT4}
\end{eqnarray}
The set $\Lambda\cup\Delta$ is a partition of $\lbrak a,c\rbrak$ and by the definition of upper and lower Darboux sums,
\begin{eqnarray}\upDar_f\lpar\Lambda\cup\Delta\rpar & = & \upDar_f\lpar\Lambda\rpar+\upDar_f\lpar\Delta\rpar,\\\label{newDIT5a}
\lowDar_f\lpar\Lambda\cup\Delta\rpar & = & \lowDar_f\lpar\Lambda\rpar+\lowDar_f\lpar\Delta\rpar.\label{newDIT5}
\end{eqnarray}
Using \eqref{newDIT4a}--\eqref{newDIT5a},
\begin{eqnarray}
\upDar_f\lpar\Lambda\cup\Delta\rpar-\lowDar_f\lpar\Lambda\cup\Delta\rpar\leq (c- x)\eta+( x-a)\eta=(c-a)\eta,\nonumber
\end{eqnarray}
which proves that $\Property(a,c)$ is true, so $\Property$ is transitive.
\end{proof}

The reason the \UCT\  and \SIFTp\  are not included in Lemma~\ref{quasiLem} is that transitivity is either not so immediate or is not true at all.  However, we shall show that $\Property$, in these cases, satisfies a condition that is not exactly transitivity, but is closely related. 

A propositional function $\Property$ of two real variables shall be called \emph{overlap-transitive} if the inequalities\linebreak $a<c-\delta<c<c+\delta<b$ in conjunction with the statements $\Property(a,c)$, $\Property(c-\delta,c+\delta)$ and $\Property(c,b)$ imply $\Property(a,b)$.

\begin{lemma}\label{overlapLem} With reference to Table~\ref{PropTable}, if $\Property$ is the propositional function associated with the \UCT\  or with \SIFTp, then $\Property$ is overlap-transitive.
\end{lemma}
\begin{proof} We first consider the case when $\Property$ is associated with the \UCT. Let $\varepsilon>0$, and suppose that the inequalities $a<c-\delta_1<c<c+\delta_1<b$ hold. By the statements $\Property(a,c)$, $\Property(c-\delta_1,c+\delta_1)$ and $\Property(c,b)$, there exist $\delta_2>0$, $\delta_3>0$ and $\delta_4>0$ such that
\begin{eqnarray} s,t\in\lbrak a,c\rbrak,\quad |s-t|<\delta_2 & \implies & |f(s)-f(t)|<\varepsilon,\label{overlap1}\\
 s,t\in\lbrak c-\delta_1,c+\delta_1\rbrak,\quad |s-t|<\delta_3 & \implies & |f(s)-f(t)|<\varepsilon,\label{overlap2}\\
s,t\in\lbrak c,b\rbrak,\quad |s-t|<\delta_4 & \implies & |f(s)-f(t)|<\varepsilon.\label{overlap3}
\end{eqnarray}
Let $\delta:=\min\{\txthalf\delta_1,\delta_2,\delta_3,\delta_4\}>0$. Suppose $s,t\in\abClosed$ such that $|s-t|<\delta$. If $s=t$, then using the fact that $f$ is a function, $f(s)=f(t)$, and the desired inequality $|f(s)-f(t)|<\varepsilon$. Without loss of generality, we assume henceforth that $s<t$. This implies $t-s>0$, so $|s-t|=t-s$.

The interval $\abClosed$ has the property that $\abClosed=\lbrak a,c\rbrak\cup\lbrak c,b\rbrak$. If $s$ and $t$ are both in $\lbrak a,c\rbrak$ (respectively, $\lbrak c,b\rbrak$), we simply use the definition of $\delta$ to deduce $\delta\leq \delta_2$ (respectively, $\delta\leq\delta_4$) to obtain $|f(s)-f(t)|<\varepsilon$ from \eqref{overlap1} (respectively, from \eqref{overlap3}), and we are done.

Since $s<t$, the remaining case is when $s\in \lbrak a,c\rbrak$ and $t\in \lbrak c,b\rbrak$. If $t\notin\lbrak c,c+\delta_1\rbrak$, then $c+\delta_1<t$. (We cannot have $t<c$ because this puts $t$ in the first interval $\lbrak a,c\rbrak$.) Thus, $\delta_1<t-c$. From $s\in \lbrak a,c\rbrak$, we obtain $s\leq c$, and so, $-c\leq-s$, which further gives $t-c\leq t-s$. Since $\delta_1<t-c$, we now have $\delta_1<t-s=|s-t|$. This contradicts $|s-t|<\delta\leq\txthalf\delta_1<\delta_1$. Hence, $t\in\lbrak c,c+\delta_1\rbrak$.

If $s\notin\lbrak c-\delta_1,c+\delta_1\rbrak$, then $s<c-\delta_1$. Since $t\in\lbrak c,c+\delta_1\rbrak$, we further have $s<c-\delta_1<c\leq t$. The first inequality leads to $t-(c-\delta_1)<t-s$, while subtracting $c-\delta_1$ from last two members of the inequality results to $\delta_1\leq t-(c-\delta_1)$. Thus, $\delta_1<t-s=|s-t|$, which contradicts $|s-t|<\delta\leq \txthalf\delta_1<\delta_1$. At this point, we have proven $s,t\in\lbrak c-\delta_1,c+\delta_1\rbrak$, and by \eqref{overlap2}, we have $|f(s)-f(t)|<\varepsilon$. This completes the proof of $\Property(a,b)$, so $\Property$ is overlap-transitive. 

Finally, we consider the case when $\Property$ is associated with the \SIFTp. Suppose $a<c-\delta<c<c+\delta<b$. By the statements $\Property(a,c)$, $\Property(c-\delta,c+\delta)$ and $\Property(c,b)$,
\begin{eqnarray}  a<x<c & \implies & f(a)<f(x),\label{overlap4}\\
 c-\delta<x<c+\delta & \implies & f(c-\delta)<f(x),\label{overlap5}\\
c<x<b& \implies & f(c)<f(x).\label{overlap6}
\end{eqnarray}
Because $a<c-\delta<c$ (respectively, $c-\delta<c<c+\delta$), we may set $x=c-\delta$ in \eqref{overlap4} (respectively, $x=c$ in \eqref{overlap5}) to obtain
\begin{eqnarray}
f(a) &<& f(c-\delta),\label{overlap7}\\
f(c-\delta) &<& f(c).\label{overlap8}
\end{eqnarray}
Suppose $a<t<b$. By eliminating $f(c-\delta)$ in \eqref{overlap7}--\eqref{overlap8}, we have $f(a)<f(c)$, so for the case $c=t$, which implies $f(c)=f(t)$, we have $f(a)<f(t)$, as desired. We assume henceforth that $t\neq c$. Since $\abClosed=\lbrak a,c\rbrak\cup\lbrak c,b\rbrak$, we have either $t\in\lpar a,c\rpar$ or $t\in\lpar c,b\rpar$. In the former, we simply set $x=t$ in \eqref{overlap4}. If $t\in\lpar c,b\rpar$, then we set $x=t$ in \eqref{overlap8}, and then use $f(a)<f(c)$ to obtain $f(a)<f(t)$. Therefore, $\Property(a,b)$ is true, and $\Property$ is overlap-transitive. 
\end{proof}

The use of supremum arguments entails proofs by contradiction that, more specifically, make use of the following.

\begin{enumerate}\item\emph{Type I contradiction:} $c=\sup\theSet$, but there exists $\delta>0$ such that $c+\delta\in\theSet$.
\item\emph{Type II contradiction:} $c=\sup\theSet$, but exists $\delta>0$ such that $c-\delta$ is an upper bound of $\theSet$.
\end{enumerate}

A proof via a supremum argument is a \emph{Type I proof} if in it, only Type I contradictions are used. We shall only be concerned with Type I proofs, and the theorems listed in the beginning of Section~\ref{supargSec} may be proven using Type I proofs.  

Even if the \SIFT\  and \IFT\  may be given Type I proofs, these theorems may still be proven using both Type I and Type II contradictions. If an exposition of the proof of the \SIFT\  in \cite{ber67} is to be made, it may be seen that the \SIFT\ may be proven using one Type I contradiction and two Type II contradictions. A proof analogous to this may also be made for the \IFT. In the proof of \cite[Lemma~B.1.1]{sal07}, a version of the \IVT\  was proven by a supremum argument that is not Type I. Details about the aforementioned non-Type I proofs are left to the reader. 

\subsection{\normalsize The general Type I proof}\label{SupGenProofSec} In the following proof, let \Star\  denote one of the theorems listed in Table~\ref{PropTable}. Let $\Property$ be the propositional function associated with \Star.

\begin{proof}[Type I Proof of \Star] By Proposition~\ref{PaaProp}, the statement $\Property(a,a)$ is true, so $a$ is an element of the set\linebreak $\theSet:=\{x\in\abClosed\  :\  \Property(a,x)\}$, which is hence nonempty. By Principle~\ref{ESnewPrin}, $c:=\sup\theSet$ exists in $\abClosed$. Using Proposition~\ref{PaaProp} again, the conclusion of \Star\  is trivially true if $\abClosed$ is a singleton, so by Principle~\ref{singlePrin}, we may further assume $a<c\leq b$.

Thus, $c\in\abClosed$, and to complete a proof that $c\in\theSet$, which is our next goal, what remains to be shown is that $\Property(a,c)$ is true. Let \edX\  and \edY\ denote the epsilon-delta statements in the second and third columns, respectively, of Table~\ref{edTable} along the row for \Star. We first consider the case when \Star\  is \SIFTp\  or the \UCT. Using the statement \edY\  on the interval $\lbrak a,c\rbrak$, we find that there exists $\delta_1>0$ small enough so that
\begin{eqnarray}
a<c-\delta_1<c,\label{genproof4seqSUP}
\end{eqnarray}
and such that 
\begin{eqnarray}
c-\delta_1\leq  t \leq c\implies \Property(t,c).\label{genproof5seqSUP}
\end{eqnarray}
By Principle~\ref{notupPrin}, there exists $\xi\in\theSet$ such that 
\begin{eqnarray}
c-\delta_1<\xi\leq c.\label{genproof4seqIIbSUP}
\end{eqnarray}
Since $c$ is an upper bound, and $\xi$ is an element, of $\theSet$, we have $\xi\leq c$. Using the first inequalities in \eqref{genproof4seqSUP} and \eqref{genproof4seqIIbSUP}, we further have
\begin{eqnarray}
a< \xi\leq c.\label{genproof4seqIISUP}
\end{eqnarray}
Since $\xi\in\theSet$, we find that $\Property(a,\xi)$ is true. Also, $\theSet\sub\abClosed$ implies $\xi\in\abClosed$. Thus, if $\xi=c$, then we are done, so we assume henceforth that $\xi\neq c$. As a consequence, \eqref{genproof4seqIISUP} further becomes $a<\xi< c$. This allows us to use \edX\  to deduce that there exists $\delta_2>0$ small enough so that
\begin{eqnarray}
a<\xi-\delta_2<\xi<\xi+\delta_2<c,\label{genproof1seqSUP}
\end{eqnarray}
and such that 
\begin{eqnarray}
\xi-\delta_2\leq  t\leq \xi+\delta_2\implies \Property(t,\xi+\delta_2).\label{genproof2seqSUP}
\end{eqnarray}
Earlier, we have shown that $\Property(a,\xi)$ is true. By \eqref{genproof5seqSUP} and \eqref{genproof4seqIIbSUP}, so is $\Property(\xi,c)$. Setting $t=\xi-\delta_2$ in \eqref{genproof2seqSUP}, $\Property(\xi-\delta_2,\xi+\delta_2)$ is also true. All these, in conjunction with \eqref{genproof1seqSUP}, allow us to use Lemma~\ref{overlapLem} on the overlap-transitive $\Property$ to deduce that $\Property(a,c)$ is true, as desired. In the other case when \Star\  is neither \SIFTp\  nor the \UCT,   by Lemma~\ref{quasiLem}, $\Property$ is transitive, so the truth of $\Property(a,\xi)$ from $\xi\in\theSet$, the truth of $\Property(\xi,c)$ from \eqref{genproof5seqSUP} and \eqref{genproof4seqIIbSUP}, and the inequalities \eqref{genproof4seqIISUP} are enough to conclude that $\Property(a,c)$ is true. Therefore, $c\in\theSet$.

Tending towards a contradiction, suppose $c\neq b$. Thus, the inequalities $a<c\leq b$ from earlier become $a<c<b$. By \edX, there exists $\delta_3>0$ small enough so that
\begin{eqnarray}
a<c-\delta_3<c<c+\delta_3<b,\label{genproof1seqNEWsup}
\end{eqnarray}
and such that 
\begin{eqnarray}
c-\delta_3\leq  t\leq c+\delta_3\implies \Property(t,c+\delta_3).\label{genproof2seqNEWsup}
\end{eqnarray}
Since $c\in\theSet$, we find that $\Property(a,c)$ is true. Setting $t=c-\delta_3$ in \eqref{genproof2seqNEWsup}, the statement $\Property(c-\delta_3,c+\delta_3)$ is true, while setting $t=c$ gives us the truth of $\Property(c,c+\delta_3)$. By Lemma~\ref{overlapLem}, $\Property(a,c+\delta_3)$ is true for the cases when \Star\  is either \SIFTp\  or the \UCT. For the other cases, we use $\Property(a,c)$, $\Property(c,c+\delta_3)$ and \eqref{genproof4seqIISUP} also to obtain the truth of $\Property(a,c+\delta_3)$. A Type I contradiction is reached, so $c=b$, and this completes the proof.
\end{proof}

\subsection{\normalsize Summary for supremum arguments}\label{SupSumSec}
A Type I proof has the following pattern. A subset $\theSet$ of $\abClosed$ is shown to be nonempty, and a special argument form is used to show $\sup\theSet\in\theSet$, after which, an analogous argument is used to show $\sup\theSet=b$. The conclusion of the desired theorem is related to the definining condition for $\theSet$, and the said special argument form, at the first implementation, is where the Type I contradiction is produced. In both implementations of the special argument form, epsilon-delta statements are used, and these naturally arise from the definition of continuity, differentiability, open set, closed set or compactness. No sequential notions are needed, and the resulting proof may be considered as elementary enough to be immediate from \ES. Earlier, we mentioned the need for heuristics, and in our opinion, this is addressed by the Type I proof. Whenever Principle~\ref{ESnewPrin} is used, one is reminded of \ES\  and the notion of upper bound, while Principle~\ref{singlePrin} is based on the notion of lower bound. More importantly, every time Principle~\ref{notupPrin} is used, one is reminded of the predicate ``not an upper bound,'' that is used to produce an element of $\theSet$ with special properties. Other steps require one to review epsilon-delta arguments which form the basic training ground for analysis. These steps, because of their immediate relatability to basic analysis notions, distinguish the Type I proof from more abstracted or sophisticated methods, and because of the applicability of the Type I proof to the most fundamental real function theorems, it may be considered as an important unifying theme for real analysis proofs.

\section{\normalsize Nested Interval Arguments}\label{NestCh}

The use of nested intervals (defined later in this chapter) is common in real analysis texts. Technique makes use of only simple notions, but, as will be seen later in the chapter, is a powerful technique. Most treatments of nested intervals, however, involve the notion of sequences. We shall show in our exposition that the technique can be introduced and developed without the need for sequences yet.

\begin{remark}\label{IVTpRem} From this point up to the remainder of the chapter, we shall be making a reference to the propositional function $\Property$ from Table~\ref{PropTable}. We shall be using the same list of possibilities for $\Property$ for nested interval arguments, but with the exception of the statement $\Property$ associated to $\IVTp$. We make a simple modification that the conclusion of $\Property(a,x)$ be changed into $f(a)f(t)>0$. That is, for any $t\in\lbrak a,x\rbrak$, the numbers $f(a)$ and $f(t)$ are either both positive or are  both negative. The reason for this shall become apparent when we give the general structure of a nested intervals proof in Section~\ref{NestIntGenProofSec}.
\end{remark}

Given a propositional function $\Property$ of two real variables, if the conditions $a\leq s< t\leq b$ and $\Property(a,b)$ imply $\Property(s,t)$, then $\Property$ is said to be \emph{inclusion-preserving}. We say that $\Property$ is  \emph{strongly inclusion-preserving} if, given subintervals $\lbrak s,t\rbrak$ and $\lbrak \alpha,x\rbrak$ of $\abClosed$ such that $\lbrak s,t\rbrak\sub\lbrak \alpha,x\rbrak$, if $\Property(\alpha,x)$ is true, then so is $\Property(s,t)$.

\begin{lemma}\label{inclusionLem} If $\Property$ is the propositional function in Table~\ref{PropTable} associated with the \DITp, then $\Property$ is strongly inclusion-preserving. All the other statements $\Property$ are inclusion-preserving.
\end{lemma}
\begin{proof} If \Star\  is not \DITp, then by routine arguments that involve set inclusions, $\Property$ is inclusion-preserving. We now consider the case when \Star\  is \DITp.

Let $\varepsilon>0$, and suppose $a\leq \alpha\leq s< t\leq x\leq b$. Thus, $t-s$ and $x-\alpha$ are positive, and so is $\varepsilon\frac{t-s}{x-\alpha}$. By $\Property(\alpha,x)$, there exists $\Delta\in\allParts(a,b)$ such that
\begin{eqnarray}
\upDar_f\lpar\Delta\rpar-\lowDar_f\lpar\Delta\rpar & \leq & (x-\alpha)\frac{1}{2(b-a)}\varepsilon\frac{t-s}{x-\alpha}=(t-s)\frac{\varepsilon}{2(b-a)}.\label{newDIT4x}
\end{eqnarray}
Let $\Delta=\{x_0,x_1,\ldots,x_n\}$, with $a=x_0<x_1<\cdots<x_n=b$. Since $s,t\in\abClosed$, there exist $I,J\in\{1,2,\ldots,n\}$ such that $x_{I-1}\leq s\leq x_{I}$ and $x_{J-1}\leq t\leq x_J$. Since $s< t$, we cannot have $J<I$. Otherwise, $x_{J-1}\leq t\leq x_{J}\leq x_{I-1}\leq s\leq x_I$, so $t\leq s$.$\lightning$

The set $\Lambda:=\{x_I,x_{I+1},\ldots,x_{J-2},x_{J-1}\}\cup\{s,t\}$ is a partition of $\lbrak s,t\rbrak$, and by a routine property of Darboux sums, $\upDar_f\lpar\Lambda\rpar-\lowDar_f\lpar\Lambda\rpar\leq \upDar_f\lpar\Delta\rpar-\lowDar_f\lpar\Delta\rpar$, and by \eqref{newDIT4x}, we get the desired result.
\end{proof}

\begin{principle}\label{bisectPrin} Let $\Property$ be one of the propositional functions in Table~\ref{PropTable}, and let $c\in\abOpen$. If $\Property(a,b)$ is false, then one of $\Property(a,c)$ or $\Property(c,b)$ is false.
\end{principle}
\begin{proof} Let \Star\  denote which statement among \BVT--\intII, from Table~\ref{PropTable}, is associated to $\Property$, and suppose that $\Property(a,b)$ is false. If \Star\  is the \UCT\  or \SIFTp, then by \edUCT\  or \edSIFTp, there exists $\delta>0$ such that $a<c-\delta<c<c+\delta<b$ and that $\Property(c-\delta,c+\delta)$ is true. If $\Property(a,c)$ and $\Property(c,b)$ are both true, then by Lemma~\ref{overlapLem}, so is $\Property(a,b)$.$\lightning$\  Hence, one of $\Property(a,c)$ or $\Property(c,b)$ is false. For the other cases, use Lemma~\ref{quasiLem}.
\end{proof}

Proofs using nested intervals shall involve the following important property of the real field $\R$.

\begin{enumerate}\item[\AP] \emph{Archimedean Property of $\R$}. For each $x\in\R$, there exists a positive integer $N$ such that $x<N$.
\end{enumerate}

\begin{principle}\label{2nPrin} If $n$ is a positive integer, then $n<2^n$.
\end{principle}
\begin{proof} Use induction.
\end{proof}

The \emph{length} of a closed and bounded interval $I=\abClosed$ is defined as $|I|:=b-a$. If $I\sub J$, where both sides are closed and bounded intervals, then the endpoints of $I$ are elements of $J$, and by routine manipulation of inequalities, $|I|\leq |J|$. 

\begin{principle}\label{microPrin} Let $I$ be a closed and bounded interval, and let $c\in I$. If $\delta>0$ such that $|I|<\delta$, then $I\sub\lbrak c-\delta,c+\delta\rbrak$.
\end{principle}
\begin{proof} Let $\abClosed:=I$, and let $x\in\abClosed$. Since $c$ is also in $\abClosed$, we have either $a\leq x\leq c\leq b$ or $a\leq c<x\leq b$. These imply $0\leq c-x\leq b-x$ or $0\leq x-c\leq b-c$, and furthermore, $|c-x|\leq b-x$ or $|x-c|\leq b-c$. From $a\leq x$ and $a\leq c$, we obtain, by routine manipulations, $b-x\leq b-a$ and $b-c\leq b-a$. At this point, we have $|c-x|\leq b-a$ or $|x-c|\leq b-a$, and both lead to $|x-c|<|I|$. Since $|I|<\delta$, we have $|x-c|<\delta$, or equivalently, $x\in\lbrak c-\delta,c+\delta\rbrak$. Therefore, $I\sub\lbrak c-\delta,c+\delta\rbrak$.
\end{proof}

By \emph{nested intervals} we mean countably many intervals $I_1$, $I_2$, $\ldots$ such that:
\begin{enumerate}\item[\NIone] If $n$ is a positive integer, then $I_{n+1}\sub I_n$.
\item[\NItwo] For each $\varepsilon>0$, there exists a positive integer $N$ such that $|I_N|<\varepsilon$.
\end{enumerate}

Nested intervals satisfy the following important property of $\R$.

\begin{enumerate}\item[\NIP] \emph{Nested Intervals Property of $\R$}. If $I_1,I_2,\ldots$ are nested intervals, then $\bigcap_{n=1}^\infty I_n\neq\emptyset$.
\end{enumerate}

We now exhibit how potent the \NIP\  is for proving real analysis theorems. 

\subsection{\normalsize The general Nested-Intervals proof}\label{NestIntGenProofSec} Let \Star\  denote one of the theorems listed in Table~\ref{PropTable}, and let $\Property$ be the propositional function associated with \Star.

\begin{proof}[Nested-Intervals Proof of \Star] Tending towards a contradiction, suppose $\Property(a,b)$ is false. By Principle~\ref{bisectPrin}, one of $\Property\lpar a,\frac{a+b}{2}\rpar$ or $\Property\lpar \frac{a+b}{2},b\rpar$ is false. Whichever among $\lbrak a,\frac{a+b}{2}\rbrak$ or $\lbrak \frac{a+b}{2},b\rbrak$ is the associated interval, we shall denote by $I_1=\lbrak a_1,b_1\rbrak\sub I_0:=\abClosed$. (Also, let $a_0:=a$ and $b_0:=b$.) In any case, $|I_1|=2^{-1}(b-a)$. Suppose that for some positive integer $n$, intervals $I_1$, $I_2$, \ldots, $I_{n-1}$ have been defined such that for each $k\in\{1,2,\ldots,n-1\}$, we have $I_k=\lbrak a_k,b_k\rbrak\sub I_{k-1}$, with $|I_k|=2^{-k}(b-a)$, and that $\Property(a_k,b_k)$ is false. Using Principle~\ref{bisectPrin}, one of $\Property\lpar a_{n-1},\frac{a_{n-1}+b_{n-1}}{2}\rpar$ or $\Property\lpar \frac{a_{n-1}+b_{n-1}}{2},b_n\rpar$ is false. The corresponding interval shall be denoted by $I_n=\lbrak a_n,b_n\rbrak\sub I_{n-1}$ with $|I_n|=\txthalf|I_{n-1}|=\txthalf 2^{-(n-1)}(b-a)=2^{-n}(b-a)$. By induction, we have defined countably many intervals $I_1,I_2,\ldots$, that satisfy \NIone, such that for each positive integer $n$, we have $|I_n|<2^{-n}(b-a)$. Let $\varepsilon>0$. By the \AP, there exists a positive integer $N$ such that $\frac{b-a}{\varepsilon}<N$. By Principle~\ref{2nPrin}, $\frac{b-a}{\varepsilon}<2^N$, which implies $2^{-N}(b-a)<\varepsilon$, where the left-hand side is $|I_N|$. Thus, the intervals $I_1,I_2,\ldots$ also satisfy \NItwo, and are hence nested intervals. By the \NIP, there exists $c\in\bigcap_{n=1}^\infty I_n$. Since each interval among $I_1,I_2,\ldots$ is a subset of $\abClosed$, then so are the intersection of these intervals, so $c\in\abClosed$. We consider the cases $c\in\abOpen$, $c=b$ and $c=a$. From Table~\ref{edTable}, in the row for the theorem \Star, let \edX, \edY\  and \edZ\  denote the epsilon-delta statements in the second, third and fourth columns, respectively.

If $c\in\abOpen$, then by \edX, there exists $\delta>0$ such that $\lbrak c-\delta,c+\delta\rbrak\sub\abOpen\sub\abClosed$, and that $\Property(c-\delta,c+\delta)$ is true. (We note here that when \edX\  is \edDITp, then there exists $\delta_0>0$ such that if we take $t$ to be $c-\txthalf\delta_0$ and $\delta:=\txthalf\delta_0$, then we still obtain $\Property(c-\delta,c+\delta)$.) By \NItwo, there exists a positive integer $M$ such that $|I_M|<\delta$. From $c\in\bigcap_{n=1}^\infty I_n$, we find that $c\in I_M$, so by Principle~\ref{microPrin}, $\lbrak a_M,b_M\rbrak=I_M\sub\lbrak c-\delta,c+\delta\rbrak$. Since $\Property(c-\delta,c+\delta)$ is true, by Lemma~\ref{inclusionLem}, $\Property(a_M,b_M)$ is true, but by the definition of the nested intervals $I_1,I_2,\ldots$, $\Property(a_M,b_M)$ is false\footnote{We recall here the importance of Remark~\ref{IVTpRem} for the case when \Star\  is \IVTp. Without the modification, there will be no means to connect the truth of $\Property(a_M,b_M)$ to the fact that $f$ is negative at $a$, because nothing is implied about how near the interval $\lbrak a_M,b_M\rbrak$ is to $a$.}.$\lightning$ 

If $c=a$, then we use \edZ\  to obtain a $\delta>0$ such that the subset $\lbrak a,a+\delta\rbrak$ of $\abClosed$ has the property that $\Property(a,a+\delta)$ is true. By \NItwo, there exists a positive integer $M$ such that $b_M-a_M=|I_M|<\delta$. From $a\in \bigcap_{n=1}^\infty I_n$, we have $a\in I_M=\lbrak a_M,b_M\rbrak\sub\abClosed$. Thus, $a\leq a_M\leq a$, or that $a_M=a$, and $b_M-a_M<\delta$ becomes $b_M<a+\delta$. Thus, $a_M\leq x\leq b_M$ implies $a\leq x\leq a+\delta$, and we have proven $\lbrak a_M,b_M\rbrak\sub \lbrak a,a+\delta\rbrak$. By Lemma~\ref{inclusionLem}, $\Property(a_M,b_M)$ is true, which, again, contradicts a defining property of the nested intervals $I_1,I_2,\ldots$.

If $c=b$, then by \edY, there exists $\delta>0$ such that $\Property(b-\delta,b)$ is true. Using \NItwo, we can find an interval $I_M$ with length strictly less than that of $\lbrak b-\delta,b\rbrak$, which intersects $I_M$ at their common right endpoint $b$. By an argument similar to that done in the previous case, $I_M\sub\lbrak b-\delta,b\rbrak$, so by Lemma~\ref{inclusionLem} again, $\Property(a_M,b_M)$ is true.$\lightning$

Since every case leads to a contradiction, $\Property(a,b)$ must be true.
\end{proof}

\subsection{\normalsize Summary for nested interval arguments}\label{NestIntSumSec}

We saw in the previous section the sheer power of a nested interval argument. Simply put: bisect the interval $\abClosed$ until countably infinite subintervals are obtained, and the intervals are to be proven as nested intervals. In each interval, the desired property in the theorem is false by definition. The \NIP\  guarantees that the nested intervals intersect at some point $c\in\abClosed$ but an epsilon-delta statement guarantees that the associated property in the conclusion of the theorem is true on some interval centered at $c$, and in this interval at least one of the nested intervals is located on which the propert is false.$\lightning$\  The simplicity of the mathematical machinery makes the technique a rival of supremum arguments. But as we have shown in our exposition, this is if the development is kept away, as much as possible, from introducing sequences.

\section{\normalsize Heine-Borel arguments}\label{HeineCh}

Described as ``a first-class compactness argument whose greatest merit is that it is exactly the technique used in most advanced settings \cite[p.~471]{tho07},'' the use of a Heine-Borel argument is what we now explore. More precisely, the compactness of a closed and bounded interval is used to prove the desired theorem, and this is from \intII. As shall be shown in the exposition, in order to generalize the proofs into one structured argument, we also needed the connectedness of a closed and bounded interval. Thus, the ground rule for this approach is that \intI\  and \intII\  are to be treated as axioms in proving \BVT--\CFTp\  from Section~\ref{supargSec}.

\subsection{\normalsize The general Heine-Borel proof}\label{HeineGenProofSec}

Let \Star\  denote one of the theorems \BVT--\CFTp\  listed in Table~\ref{PropTable}, with $\Property$ as the propositional function associated with \Star.

\begin{proof}[Heine-Borel Proof of \Star] If $c\in\abOpen$, then by the epsilon-delta statement \edX\   associated with \Star\  in the second column of Table~\ref{edTable}, there exists $\delta_c>0$ such that $I_c:=\lpar c-\delta_c,c+\delta_c\rpar\sub\abOpen$ and that $\Property(c-\delta_c,c+\delta_c)$ is true. Let \edY\  and \edZ\  be the epsilon-delta statements associated to \Star\  in the third and fourth columns of Table~\ref{edTable}. By \edY\  and \edZ, there exist $\delta_a>0$ and $\delta_b>0$ such that\linebreak $\lbrak a,a+\delta_a\rpar\sub\abClosed$, $\lpar b-\delta_b,b\rbrak\sub\lpar a,b\rbrak$ and that the statements $\Property(a,a+\delta_a)$ and $\Property(b-\delta_b,b)$ are true. If we define $I_a:=\lpar a-\delta_a,a+\delta_a\rpar$ and $I_b:=\lpar b-\delta_b,b+\delta_b\rpar$, then every element of $\abClosed$ is in $\bigcup_{x\in\abClosed} I_x$, so $\opCover:=\{I_x\  :\  x\in\abClosed\}$ is an open cover of $\abClosed$. By \intII, there exist $x_1,x_2,\ldots,x_n$ such that $\abClosed\sub\bigcup_{k=1}^nI_{x_k}$. 

If $I_a$ is not among the intervals $I_{x_1}$, $I_{x_2}$, \ldots, $I_{x_n}$, then by the definition of the intervals in $\opCover$, we recall that either $I_c=\lpar c-\delta_c,c+\delta_c\rpar\sub\abOpen$ if $c\in\abOpen$, or the left endpoint $b-\delta_b$ of $I_b$ satisfies $a<b-\delta_b$. Thus, all the left endpoints of $I_{x_1}$, $I_{x_2}$, \ldots, $I_{x_n}$ are greater than $a$, but from $a\in\abClosed\sub\bigcup_{k=1}^nI_{x_k}$, there is at least one endpoint $x_k-\delta_{x_k}\leq a$.$\lightning$\  Hence, $I_a$ is one of the intervals $I_{x_1}$, $I_{x_2}$, \ldots, $I_{x_n}$. Without loss of generality, we assume $I_a=I_{x_1}$.

Given $x\in\abClosed$, there exists $K\in\{1,2,\ldots,n\}$ such that $x_K-\delta_{x_{K}}<x<x_K+\delta_{x_{K}}$. This means that the finite set $\theSet:=\{k\in\{1,2,\ldots,n\}\  :\  x_k-\delta_{x_k}<x\}$ is nonempty, and must have a least element $\ell$. 

We first consider the case $a+\delta_a\geq x_\ell-\delta_{x_\ell}$. Here, $\lbrak a,x_\ell-\delta_{x_\ell}\rbrak\sub\lbrak a,a+\delta_a\rbrak$, and since $\Property(a,a+\delta_a)$ is true, by Lemma~\ref{inclusionLem}, $\Property(a,x_\ell-\delta_{x_\ell})$ is true. We also have $\lbrak x_\ell-\delta_{x_\ell},x\rbrak\sub \lbrak x_\ell-\delta_{x_\ell},x_\ell+\delta_{x_\ell}\rbrak$, with $\Property(x_\ell-\delta_{x_\ell},x_\ell+\delta_{x_\ell})$ true, so by Lemma~\ref{inclusionLem} again, $\Property(x_\ell-\delta_{x_\ell},x)$ is true. Using one of Lemmas~\ref{quasiLem} and \ref{overlapLem}, $\Property(a,x)$ is true.

For the case $a+\delta_a< x_\ell-\delta_{x_\ell}$. If $\ell=1$, then from our assumption that $I_{x_1}=I_a$, we get\linebreak $x_\ell-\delta_{x_\ell}=x_1-\delta_{x_1}=a-\delta_{a}<a+\delta_{a}< x_\ell-\delta_{x_\ell}$, leading to $ x_\ell-\delta_{x_\ell}< x_\ell-\delta_{x_\ell}$.$\lightning$\  Hence, $\ell\geq 2$.

Tending towards a contradiction, suppose that none of the intervals $I_{x_k}$, with $k\notin\{1,\ell\}$, intersects both $I_a$ and $I_{x_\ell}$. If $A:=\abClosed\cap I_1$ and $B:=\abClosed\cap\lpar\bigcup_{k=2}^nI_{x_k}\rpar$, then $\abClosed\sub\bigcup_{k=1}^nI_{x_k}$ implies that $A\cup B=\abClosed$ and $A\cap B=\emptyset$, with each of $A$ and $B$ as proper subsets of $\abClosed$ (since $a\in A\setdiff B$ and $x\in B\setdiff A$), and both are open relative to $\abClosed$. Thus, $\abClosed$ is not connected, contradicting \intI. Henceforth, there exist an interval $I_{x_m}$ and some $u\in I_a\cap I_{x_m}$ and $v\in I_{x_m}\cap I_{x_\ell}$. These imply $\lbrak a,u\rbrak\sub \lbrak a,a+\delta_a\rbrak$, $\lbrak u,v\rbrak\sub \lbrak x_m-\delta_{x_m},x_m+\delta_{x_m}\rbrak$, $\lbrak v,x\rbrak\sub\lbrak x_\ell-\delta_{x_\ell},x_\ell+\delta_{x_\ell}\rbrak$ (if $v\leq x$) and $\lbrak x,v\rbrak\sub \lbrak a,v\rbrak$ (if $x<v$). In these set inclusions, let any of the right-hand sides be denoted by $\lbrak\alpha,\beta\rbrak$. By the definition of $\opCover$, $\Property(\alpha,\beta)$ is true, so by Lemma~\ref{inclusionLem}, the statements $\Property(a,u)$, $\Property(u,v)$, $\Property(v,x)$ (if $v\leq x$) and $\Property(x,v)$ (if $x<v$) are true. If $v\leq x$, we use $\Property(a,u)$, $\Property(u,v)$, $\Property(v,x)$ and Lemmas~\ref{quasiLem} and \ref{overlapLem} to deduce $\Property(a,x)$, while if $x<v$, then we use the same lemmas on $\Property(a,u)$, $\Property(u,v)$, with $\lbrak x,v\rbrak\sub \lbrak a,v\rbrak$ and Lemma~\ref{inclusionLem}, to also obtain $\Property(a,x)$.

In any case, we have proven that for each $x\in\abClosed$, the statement $\Property(a,x)$ is true, as desired.
\end{proof}

\subsection{\normalsize Summary for Heine-Borel arguments}\label{HeineSumSec}

The structure of the general proof via a Heine-Borel argument has the same elegance, and relative simplicity, as that of a proof using a Nested Interval argument. There is a clear and intuitive method to the proof: the local properties involving continuity or differentiability, or the effects of the associated epsilon-delta arguments, are used to creat an open cover of $\abClosed$. By compactness, we obtain a finite subcovering, and the finiteness condition (together with connectedness) allows for the desired property to be proved all throughout $\abClosed$. The only drawback is that using \intIp\  and \intII\  as starting points would require the introduction of topological notions, a requirement that the two previous approaches does not really have. Conceivably, though, a full exposition of real function theory that makes use of this approach may be kept as tidy as possible so that only minimal topology is discussed before \intIp\  and \intII\  are stated.

\section{\normalsize Sequential Compactness Arguments}\label{NewSeqSec}

The set of all positive integers shall be denoted by $\N$, and a function $\N\into\R$ shall be called a \emph{sequence}. The usual notation for functions, such as $c : \N\into\R$ with $c:n\mapsto c(n)$, is customarily NOT used for sequences. Instead, we call $c_n:=c(n)$ as  the \emph{$n$th term} of the sequence with $n$ as the \emph{index} of the term $c_n$. The sequence itself is denoted symbolically by enclosing the $n$th term in parenthesis and indicating as a further subcript that $n\in\N$ is used to index the terms: $\cseq$ which may be read as ``the sequence with terms $c_n$.'' If all terms of $\cseq$ are elements of $\theSet\sub\R$, then we say that $\cseq$ is a sequence \emph{in} $\theSet$. A sequence $\cseq$ \emph{converges} to $c\in\R$  if, for each $\varepsilon>0$, there exists $N\in\N$ such that for all $n\in\N$ if $n\geq N$, then $|c_n-c|<\varepsilon$. 

\subsection{\normalsize Epsilon arguments for sequences}\label{EpSeqSec}

 If there exists $c\in\R$ such that $\cseq$ converges to $c$, then $\cseq$ \emph{converges}, or is \emph{convergent}. This definition is something that we shall call an ``epsilon statement.'' (There is no ``delta.'') The counterpart, then, of epsilon-delta arguments are ``epsilon arguments.''

If $\cseq$ converges to $c\in\R$, then an ``epsilon-over-two'' technique, like that in the begining of Chapter~\ref{EpDelCh}, can be used to prove that $c$ is unique, which, in this case, we refer to as the \emph{limit of (the sequence) $\cseq$} and, in symbols, $\alim:=c$. Also by an epsilon-over-two technique, a proof may be made of what we shall call the \emph{subtraction rule for sequence limits}, which states that given convergent sequences $\xseq$ and $\cseq$, we have $\seqlimOp(x_n-c_n)=\seqlimOp x_n-\seqlimOp c_n$.

A sequence of constant terms, say $\lpar C\rpar_{n\in\N}$, is convergent. In particular, it converges to $C$: given $\varepsilon>0$, choose $N=1$, and for any index $n$, $|C-C|=0<\varepsilon$. Since a sequence is a function $\N\into\R$, its composition with the function $\R\into\R$ given by $x\mapsto-x$ is also a function $\N\into\R$, or is also a sequence. That is, given any sequence $\cseq$, we also have the sequence $\cseqN$. If $c=\seqlimOp c_n$, then, given $\varepsilon>0$, there exists $N\in\N$ such that for all $n\in\N$, if $n\geq N$, then $|-c_n-(-c)|=|c-c_n|=|c_n-c|<\varepsilon$. Thus, $\cseqN$ is also convergent, with $\seqlimOp(-c_n)=-\seqlimOp c_n$. 

If two convergent sequences $\aseq$ and $\bseq$ satisfy the condition $a_n\leq b_n$ for all $n$, then\linebreak $\seqlimOp a_n\leq \seqlimOp b_n$. Otherwise, we may use $\varepsilon:=a_N-b_N$, for some index $N$, in an epsilon argument to produce a contradiction.


If we again make use of the fact that a sequence is a function $\N\into\R$, the composition of a function $\N\into\N$ with a sequence is again a function $\N\into\R$ and is hence also a sequence. More precisely, given a sequence $\cseq$ and a function $\N\into\N$ denoted by $k\mapsto N_k$, then $\lpar c_{N_k}\rpar_{k\in\N}$ is also a sequence. If the function $k\mapsto N_k$ has the further property that $h<k$ implies $N_h<N_k$, then $\lpar c_{N_k}\rpar_{k\in\N}$ is said to be a \emph{subsequence} of $\cseq$. 

A sequence $\cseq$ is said to be \emph{bounded} if there exists $M>0$ such that $M$ is an upper bound of $\{|c_n|\  :\  n\in\N\}$. A routine argument can be made to give another characterization: a sequence $\cseq$ is bounded if and only if there exists a closed and bounded interval $\abClosed$ such that $\cseq$ is a sequence in $\abClosed$. A convergent sequence is bounded. To see this, suppose $\seqlimOp c_n=c\in\R$. An epsilon argument may be made, with $\varepsilon=1$ to show that, starting from some index $N$, the number $1+|c|$ is an upper bound of $\{|c_n|\  :\  n\geq N\}$. The final upper bound may then be chosen from among the finitely many real numbers $|c_1|$,  $|c_2|$, $\ldots$, $|c_{N-1}|$, $1+|c|$. The converse is not true. An epsilon argument may be made to show that $\lpar(-1)^n\rpar_{n\in\N}$ is not convergent, but an upper bound (from $\R$) may be easily obtained for the set $\{|(-1)^n|\  :\  n\in\N\}=\{1\}$. If we introduce subsequences, however, we get a related statement that is true for $\R$.

\begin{enumerate}\item[\BWP]\emph{Bolzano-Weierstrass Property of $\R$}. A bounded sequence has a convergent subsequence.
\end{enumerate}

The ``epsilon proofs'' of statements \boundseqlimPrin--\seqsublimPrin\  below are left to the reader. These ``epsilon statements'' shall be needed in the sequential proofs for the selected real analysis theorems.

\begin{lemma} The following statemetns are true for the given sets, functions and sequences.
\begin{enumerate}
\item[\boundseqlimPrin] If $\cseq$ is a sequence in $\abClosed$ that is convergent, then $\seqlimOp c_n\in\abClosed$.

\item[\monoseqPrinII] If $\aseq$ is a sequence of nonnegative terms that converge to zero, then given $m,n\in\N$, the condition $m\leq n$ implies $a_{n}\leq a_m$.
\item[\subseqInd] If $\asubseq$ is a subsequence of $\aseq$, then for each $k\in\N$, we have $\frac{1}{N_k}\leq \frac{1}{k}$.
\item[\seqsublimPrin] If $\csubseq$ is a convergent subsequence of a convergent sequence $\cseq$, then $\seqlimOpIII c_{N_k}=\seqlimOp c_n$.
\end{enumerate}
\end{lemma}

Aside from the above, we also have the following extension of Principle~\ref{bisectPrin}, which was for nested interval arguments.

\begin{principle}\label{bisectPrinSeq} Let $\Property$ be one of the propositional functions in Table~\ref{PropTable}, and let $\Delta=\{x_0,x_1,\ldots,x_m\}$ be a partition of $\abClosed$. If $\Property(a,b)$ is false, then there exists $M\in\{1,2,\ldots,m\}$ such that $\Property(x_{M-1},x_M)$ is false.
\end{principle}
\begin{proof} Similar to the argument made in the proof of Principle~\ref{bisectPrin}, if $\Property$ is false on $\abClosed$ but is true on all of the subintervals of the given partition, then by the transitivity or the overlap transitivity of $\Property$, we find that $\Property(a,b)$ is true.$\lightning$
\end{proof}

\subsection{\normalsize The general Sequential Compactness proof}\label{NestIntGenProofSec} Let \Star\  denote one of the theorems listed in Table~\ref{PropTable}, and let $\Property$ be the propositional function associated with \Star.

\begin{proof}[Sequential Proof of \Star] Tending towards a contradiction, suppose $\Property(a,b)$ is false. Given $n\in\N$, let $\Delta_n:=\{x_0,x_1,\ldots,x_{|\Delta_n|}\}$ be a partition of $\abClosed$ such that for each index $j$ of the partition elements, we have $x_j=a+\frac{j}{2n}$. By routine computations, the length of each subinterval $\lbrak x_{j-1},x_j\rbrak$ is $\frac{1}{2n}<\frac{1}{n}$. By Principle~\ref{bisectPrinSeq}, there exists $J(n)\in\{1,2,\ldots,|\Delta_n|\}$ such that $\Property(x_{J(n)-1},x_{J(n)})$ is false. We define $s_n:=x_{J(n)-1}$ and $t_n:=x_{J(n)}$. Since $s_n$ cannot exceed $t_n$, the difference $t_n-s_n$ is nonnegative and is hence equal to its absolute value, and from the fact that the length of $\lbrak x_{j-1},x_j\rbrak$ is $\frac{1}{2n}<\frac{1}{n}$, for $j=J(n)$, gives us
\begin{eqnarray}
|t_n-s_n| &<&\frac{1}{n},\qquad (n\in\N).\label{tnsn}
\end{eqnarray}
As a reiteration, our construction of the sequences $\sseq$ and $\tseq$ is such that
\begin{enumerate}\item[\arbstate] for each $n\in\N$, the statement $\Property(s_n,t_n)$ is false.
\end{enumerate}
Now, for an important consequence of \eqref{tnsn}, given $\eta>0$, by the \AP, there exists a positive integer $N>\frac{1}{\eta}$, or that $\frac{1}{N}<\eta$. For each integer $n\geq N$, we have $|t_n-x_n|<\frac{1}{n}\leq\frac{1}{N}<\eta$, and we have proven that
\begin{eqnarray}
\seqlimOp(t_n-s_n)=0.\label{UCTbeforesubNEW}
\end{eqnarray}
Since $\sseq$ and $\tseq$ are sequences in $\abClosed$, by the \BWP, these sequences have convergent subsequences $\ssubseq$ and $\tsubseq$, respectively. By the subtraction rule for sequence limits, $\lpar t_{N_k}-s_{N_k}\rpar_{k\in\N}$ is also convergent, and using the definition of subsequence, $\lpar t_{N_k}-s_{N_k}\rpar_{k\in\N}$ is a subsequence of $\lpar t_{n}-s_{n}\rpar_{n\in\N}$. By \eqref{UCTbeforesubNEW} and \seqsublimPrin,
\begin{eqnarray}
\seqlimOpIII\lpar t_{N_k}-s_{N_k}\rpar=0.\label{UCTsubNEW}
\end{eqnarray}
The convergence of the sequences $\tsubseq$ and $\ssubseq$ allows us to use the subtraction rule for sequence limits on the left-hand side of \eqref{UCTsubNEW} to turn it into $\seqlimOpIII t_{N_k}-\seqlimOpIII s_{N_k}=0$, and so \eqref{UCTsubNEW} becomes $\seqlimOpIII t_{N_k}=\seqlimOpIII s_{N_k}$. Define $c$ as the common value of these sequence limits, or more explicitly,
\begin{eqnarray}
\seqlimOpIII t_{N_k} =c= \seqlimOpIII s_{N_k}.\label{limUCTsubNEW}
\end{eqnarray}
Since $\xsubseq$ and $\csubseq$ are both sequences in $\abClosed$, by \boundseqlimPrin, $c\in\abClosed$. We further specify the location of $c$ depending on the index $k$ for the sequence of intervals $\lbrak s_{N_k},t_{N_k}\rbrak$. Given $\delta>0$, by the \AP, there exists $K\in\N$ such that $\frac{1}{K}<\frac{\delta}{2}$. By \subseqInd\  and \eqref{UCTsubNEW}, $\left|t_{N_K}-s_{N_K}\right|<\frac{1}{N_K}\leq\frac{1}{K}<\frac{\delta}{2}$, so the length of the interval $\lbrak s_{N_k},t_{N_k}\rbrak$ is strictly less than $\frac{\delta}{2}$.

By the definition of the sequences $\sseq$ and $\tseq$, the interval $\lbrak s_{N_K},t_{N_K}\rbrak$ is one subinterval in some partition of $\abClosed$. If $c\in\abClosed$ is not in the subinterval $\lbrak s_{N_K},t_{N_K}\rbrak$, then either $c<s_{N_K}$ or $t_{N_K}<c$. Equivalently, either $s_{N_K}-c>0$ or $c-t_{N_K}>0$. From \eqref{limUCTsubNEW}, there exist $A,B\in\N$ such that for all $k\leq C:=\max\{A,B,K\}$, and in particular, at $k=C$, we have $|s_{N_C}-c|<s_{N_K}-c$ or $|c-t_{N_C}|<c-t_{N_K}$. These imply $s_{N_C}<s_{N_k}\leq t_{N_K}<t_{N_C}$.  Subtracting $s_{N_K}$ from every member, and using the alternative form $-s_{N_K}<-s_{N_C}$ of the left-most inequality, we have $0\leq t_{N_K}-s_{N_K}<t_{N_C}-s_{N_C}$. By the definition of the intervals $\lbrak s_{N_K},t_{N_K}\rbrak$, the terms in the sequence $\lpar t_{N_k}-s_{N_k}\rpar_{k\in\N}$ are all nonnegative. Thus, $t_{N_K}-s_{N_K}<t_{N_C}-s_{N_C}$ with $K\leq C$, and this contradicts \eqref{UCTsubNEW} and \monoseqPrinII. As a result, we have proven:

\begin{enumerate}\item[\arbstateII] for any $\delta>0$, there exists $K\in\N$ such that $c\in\lbrak s_{N_k},t_{N_k}\rbrak$, with the length of this interval strictly less than $\frac{\delta}{2}$.
\end{enumerate}

From the fact that $c\in\abClosed$, we consider the cases $c\in\abOpen$, $c=b$ and $c=a$. From Table~\ref{edTable}, in the row for the theorem \Star, let \edX, \edY\  and \edZ\  denote the epsilon-delta statements in the second, third and fourth columns, respectively.

If $c\in\abOpen$, then by \edX, there exists $\delta>0$ such that $\lbrak c-\delta,c+\delta\rbrak\sub\abOpen\sub\abClosed$, and that $\Property(c-\delta,c+\delta)$ is true. (If \edX\  is \edDITp, then there exists $\delta_0>0$ such that if we take $t$ to be $c-\txthalf\delta_0$ and $\delta:=\txthalf\delta_0$, then we still obtain $\Property(c-\delta,c+\delta)$.) By \arbstateII\ and Principle~\ref{microPrin}, there exists $K\in\N$ such that $c\in\lbrak s_{N_k},t_{N_k}\rbrak\sub\lbrak c-\delta,c+\delta\rbrak$. Since $\Property(c-\delta,c+\delta)$ is true, by Lemma~\ref{inclusionLem}, so is $\Property(s_{N_k},t_{N_k})$, contradicting \arbstate.

If $c=a$, then we use \edZ\  to obtain a $\delta>0$ such that the subset $\lbrak a,a+\delta\rbrak$ of $\abClosed$ has the property that $\Property(a,a+\delta)$ is true. Using \arbstateII\ and Principle~\ref{microPrin} again, there exists $K\in\N$ such that $c\in\lbrak s_{N_k},t_{N_k}\rbrak\sub \lbrak a,a+\delta\rbrak$, and by Lemma~\ref{inclusionLem}, $\Property(s_{N_k},t_{N_k})$ is true, contradicting \arbstate.

If $c=b$, then by \edY, there exists $\delta>0$ such that $\Property(b-\delta,b)$ is true. Also by \arbstateII\ and Principle~\ref{microPrin}, there exists $K\in\N$ such that $c\in\lbrak s_{N_k},t_{N_k}\rbrak\sub\lbrak b-\delta,b\rbrak$, so by Lemma~\ref{inclusionLem} again, $\Property(s_{N_k},t_{N_k})$ is true.$\lightning$

Since every case leads to a contradiction, $\Property(a,b)$ must be true.
\end{proof}

\subsection{\normalsize Summary for sequential proofs} The above general sequential proof has significant resemblance to the general nested intervals proof. The main differences are on the use of subsequences instead of interval bisection, and the use of sequence convergence instead of the second axiom for nested intervals. In the literature, such as in \cite[pp.~59,61]{bro96}, we find beautiful sequential proofs that use the ``commutativity'' of a continuous function with the sequence limit operator: $f\lpar\seqlimOp c_n\rpar=\seqlimOp f(c_n)$ for a continuous function $f$ after employing the \BWP. In the proof of the \UCT\  in \cite[pp.~61]{bro96}, one may see how nicely the negation of the predicate ``is uniformly continuous'' produces two convergent (sub)sequences in $\abClosed$ that have the same behavior as the ones used in the general sequential proof above. This interplay of sequences beautifully intertwines the aforementioned commutativity rule and the Bolzano-Weierstrass theorem. It may become apparent to more advanced readers that the use of the transitivity and inclusion properties of $\Property$ for the \UCT\  as a special case for the general proof is an over-kill, but remeber that the goal of the exposition is to provide a unified proof. That is, if only the \UCT\  is to be proven sequentially, then the traditional sequential proof of the \UCT\  shall suffice.

\section{\normalsize Building Real Function Theory}\label{BuildCh}

When we gave the discussion in Section~\ref{buildSec}, we have begun giving insights on how real function theory can be built from the pillar theorems \BVT--\intII\   listed in the introduction. In this chapter, we exhibit proofs on how the pillar theorems lead to main theorems of real function theory: for the elementary theory of differentiation, a mean value theorem; for integrals, the fundamental theorems of calculus.

\subsection{\normalsize Mean Value Theorem for continuously differentiable functions}\label{MVTsec} Perhaps the mainstream opinion in real function theory is that the Mean Value Theorem (for derivatives) is the fundamental theorem of Differential Calculus \cite[p. 174]{bar11}, but there are mathematicians who have strongly advocated otherwise \cite{ber67,coh67,tuc97}. According to these sources, any of the equivalent statements \CFT, \SIFT\  and \IFT\   is a ``better'' alternative, for the purposes of rigorous real function theory. This author is of this perspective. We find in the literature a criticism of the traditional proof, via Rolle's Theorem, of the Mean Value Theorem for derivatives as too elaborate and is ``not natural'' \cite[p. 234]{tuc97}. Although there are no strong opinions expressed in \cite{pow63}, the introduction in that paper also hints at the desirability of having proven an important calculus theorem, which is the \CFT\  in this case, with no recourse to Rolle's Theorem or the Mean Value Theorem for derivatives. The ``cost'' of this approach is that the resulting theorems are for continuously differentiable functions, and not for differentiable functions in general.  A function $f$ is a \emph{$C^1$ function} or is \emph{continuously differentiable} on $\abClosed$ if $f$ is differentiable at any element of $\abClosed$ and $f'$ is continuous on $\abClosed$. However, one may recall that in calculus, in fact, the functions considered are usually those differentiable several times, and so, the theorems on $C^1$ functions that follow are, as said in \cite{ber67}, all that one needs in calculus.

\begin{theorem}[Mean Value Theorem for $C^1$ Functions]\label{MVTthm} If $f$ is continuously differentiable on $\abClosed$, then there exists $\xi\in\abClosed$ such that $f(b)-f(a)=(b-a)f'(\xi)$.
\end{theorem}
\begin{proof} If $a=b$, then the desired equation is true for any $\xi\in\abClosed$. Thus, we assume henceforth that $a<b$. Since $f$ is a $C^1$ function, $f'$ is continuous on $\abClosed$, and the \EVT\  applies to $f'$. That is, there exist $\xi_1,\xi_2\in\abClosed$ such that, for any $x\in\abClosed$, we have $f'(\xi_1)\leq f'(x)\leq f'(\xi_2)$. Thus, $f'(x)\leq f'(\xi_2)$ and $-f'(x)\leq -f'(\xi_1)$, where, because $f$ is diffentiable at any element of $\abClosed$, the function $-f$, according to differentiation rules, is also differentiable at any element of $\abClosed$. Consequently, the \MVI\  applies to both $f$ and and $-f$, and we have
\begin{eqnarray}
f(b)-f(a) &\leq& (b-a)f'(\xi_2),\label{C1MVT1}\\
&&\nonumber\\
-f(b)+f(a) &\leq &-(b-a)f'(\xi_1),\nonumber\\
(b-a)f'(\xi_1) &\leq & f(b)-f(a).\label{C1MVT2}
\end{eqnarray}
Since $a<b$, from \eqref{C1MVT1} and \eqref{C1MVT2}, we get
\begin{eqnarray}
f'(\xi_1) \quad\leq\quad \frac{f(b)-f(a)}{b-a} &\leq & f'(\xi_2).\nonumber
\end{eqnarray}
If equality holds in one of these inequalities, we take $\xi$ to be either $\xi_1$ or $\xi_2$, but if both inequalities are strict, then by applying the \IVT\  to either $f'$ or $-f'$ (depending on the correct ordering of $\xi_1$ and $\xi_2$, and if we let $I$ be the open interval with these as endpoints, then) there exists $\xi\in I\sub\abClosed$ such that $\frac{f(b)-f(a)}{b-a}=f'(\xi)$.
\end{proof}

\subsection{\normalsize Fundamental Theorem of Calculus}\label{FTCsec} The two forms of the fundamental theorem of calculus, which are rightfully called the “workhorses of calculus” \cite[p.~107]{fit10}, eliminate the need for the algebraic theorems for integration to be proven independent of their counterparts for differentiation, because these workhorses of calculus state how integration and differentiation are intertwined. In this section, we give simple proofs of these two forms of the fundamental theorem, and these proofs rely mainly on the pillar theorems \EVT, \IVT\  and \CFT. First, we need an important property of integrals, which may be viewed as a counterpart for integrals of Theorem~\ref{MVTthm}, which was for derivatives. In contrast to the Mean Value Theorem for derivatives, which has some issues as discussed at the beginning of this chapter, the \iMVT\  is said to be of an entirely different nature, as it follows easily from the definition of integral \cite[p. 239]{tuc97}. Our treatment of the Fundamental Theorem of Calculus is such that \FTCi\  is proven first (which makes use of \iMVT), and \FTCii\  is a consequence. It is possible however, to prove \FTCii\  independently using the notorious Mean Value Theorem for derivatives \cite[Exercise 5.20]{pro91}, but the apparent simplicity of this proof is a high price to pay, because of what was said in the previous section about the traditional proof of the Mean Value Theorem for derivatives.

\begin{lemma}[Mean Value Theorem for Integrals \iMVT] If $f$ is continuous on $\abClosed$, then there exists {\small$q\in\R$} such that {\small$(1-q)b+qa\in\abClosed$} and {\small$\int_a^bf=(b-a)f((1-q)b+qa)$}.
\end{lemma}
\begin{proof} If $a=b$, then $q$ can be taken to be any real number, and {$f((1-q)a+qa)=f(a)$}. Consequently, $\int_a^af=0=(a-a)f(a)=(a-a)f((1-q)a+qa)$. We assume henceforth that $a<b$, so $\frac{1}{b-a}$ exists in $\R$. By the \EVT, there exist $\xi_1,\xi_2\in\abClosed$ such that 
\begin{eqnarray}
f(\xi_1)\leq f(x)\leq f(\xi_2),\qquad\mbox{if }x\in\abClosed.\label{iMVT0}
\end{eqnarray}
If $\varphi,\psi:\abClosed\into\R$ be the constant functions defined by $\varphi:x\mapsto f(\xi_1)$ and $\psi:x\mapsto f(\xi_2)$, where $\{a,b\}$ is a partition of $\abClosed$. Thus, $\varphi$ and $\psi$ are step functions, and by \eqref{iMVT0}, $\varphi\leq f\leq \psi$. By the \DIT, $f$ has a Darboux integral over $\abClosed$, and 
\begin{eqnarray}
\int_a^b\varphi\quad\leq &\displaystyle\int_a^bf& \leq\quad\int_a^b\psi,\nonumber\\
f(\xi_1)(b-a)\quad\leq &\displaystyle\int_a^bf& \leq\quad f(\xi_2)(b-a),\nonumber\\
f(\xi_1)\quad\leq &\displaystyle\frac{\displaystyle\int_a^bf}{b-a}&\leq\quad f(\xi_2),\nonumber
\end{eqnarray}
so by a routine argument centered on our statement of the \IVT, there exists $\xi\in\abClosed$ such that\linebreak $\frac{\int_a^bf}{b-a}=f(\xi)$. If $q:=\frac{b-\xi}{b-a}$, then $qb-qa=b-\xi$, and $\xi=(1-q)b+qa$. 

Therefore, $\frac{\int_a^bf}{b-a}=f((1-q)b+qa)$.
\end{proof}

A function $F$ is said to be an \emph{antiderivative} of $f$ if $F'=f$

\begin{theorem}[Fundamental Theorem of Calculus, First Form \FTCi] If $f$ is continuous on $\abClosed$, then $x\mapsto\int_a^xf$ is an antiderivative of $f$.
\end{theorem}
\begin{proof} Given $x\in\abClosed$, the continuity of $f$ at any element of $\abClosed$ implies the continuity of $f$ at any element of $\lbrak a,x\rbrak$. By the \DIT, $\int_a^xf$ exists, and from the definition of Darboux integral, the uniqueness of $\int_a^xf$ implies that the rule of assignment $x\mapsto\int_a^xf$ is indeed a function. 

Let $x,c\in\abClosed$ such that $x\neq c$, so that $\frac{1}{x-c}$ and $\frac{1}{c-x}$ exist in $\R$. We have the cases $c\leq x$ or $x<c$, which imply $c\in\lbrak a,x\rbrak$ and $x\in\lbrak a,c\rpar\sub\lbrak a,c\rbrak$, respectively. By the \iMVT, there exists $q\in\R$ such that
\begin{eqnarray}
f((1-q)x+qc) &=&\frac{\displaystyle\int_c^xf}{x-c},\qquad\mbox{if }c\in\lbrak a,x\rbrak,\mbox{ or:}\label{FTCiA}\\
f((1-q)c+qx) &=&\frac{\displaystyle\int_x^cf}{c-x},\qquad\mbox{if }x\in\lbrak a,c\rbrak.\label{FTCiB}
\end{eqnarray}
Let $r:=q$ if $c\in\lbrak a,x\rbrak$ or else, $r:=1-q$. Thus, we may rewrite \eqref{FTCiA}--\eqref{FTCiB} as
\begin{eqnarray}
f((1-r)x+rc) &=&\frac{\displaystyle\int_c^xf}{x-c},\qquad\mbox{if }c\in\lbrak a,x\rbrak,\mbox{ or:}\label{FTCiC}\\
f((1-r)x+rc) &=&\frac{\displaystyle\int_x^cf}{c-x},\qquad\mbox{if }x\in\lbrak a,c\rbrak.\label{FTCiD}
\end{eqnarray}
By the additivity property of integrals, if $c\in\lbrak a,x\rbrak$, then
\begin{eqnarray}
\int_a^xf &=&\int_a^cf + \int_c^xf,\nonumber\\
\int_a^xf-\int_a^cf &= & \int_c^xf,\label{AdditiveCaseA}
\end{eqnarray}
while if $x\in\lbrak a,c\rbrak$, then
\begin{eqnarray}
\int_a^cf &=&\int_a^xf + \int_x^cf,\nonumber\\
\int_a^cf-\int_a^xf &= & \int_x^cf.\label{AdditiveCaseB}
\end{eqnarray}
If we substutite \eqref{AdditiveCaseA}-\eqref{AdditiveCaseB} into \eqref{FTCiC}--\eqref{FTCiD}, respectively, then we get, in both cases, just the equation
\begin{eqnarray}
f((1-r)x+rc) &=&\frac{\displaystyle\int_a^xf-\int_a^cf}{x-c}.\label{AdditiveCaseC}
\end{eqnarray}
From theorems on limits, $\displaystyle\lim_{x\rightarrow c}((1-r)x+rc)=(1-r)c+rc=c$. Since $f$ is a function, $$f\lpar\displaystyle\lim_{x\rightarrow c}((1-r)x+rc)\rpar=f(c),$$ where on the left-hand side, $f$ can be moved past the limit operator, because $f$ is continuous at $c$. Combining this with \eqref{AdditiveCaseC} we obtain 
\begin{eqnarray}
f(c)=\lim_{x\rightarrow c}f((1-r)x+rc)=\lim_{x\rightarrow c}\frac{\displaystyle\int_a^xf-\int_a^cf}{x-c}.\nonumber
\end{eqnarray}
Therefore, $x\mapsto \int_a^xf$ is differentiable at $c$, where the value of the derivative at $c$ is $f(c)$. By equality of functions, the derivative of $x\mapsto \int_a^xf$ is $f$, or equivalently, $x\mapsto \int_a^xf$ is an antiderivative of $f$.
\end{proof}

\begin{theorem}[Fundamental Theorem of Calculus, Second Form, \FTCii]\label{FTCiitheorem} If $f:\abClosed\into\R$ is continuous, and if $F$ is an antiderivative of $f$, then $\Int_a^bf=F(b)-F(a)$.
\end{theorem}
\begin{proof} Let $x\in\abClosed$. Since $F$ is an antiderivative of $f$, $F'(x)=f(x)$. By \FTCi, the function\linebreak $G:\abClosed\into\R$ defined by $G:x\mapsto\displaystyle\int_a^xf$ is an antiderivative of $f$, so $G'(x)=f(x)$. By the difference rule for differentiation, $F-G$ is differentiable, and, for any $x\in\abClosed$, 

$(F-G)'(x)=F'(x)-G'(x)=f(x)-f(x)=0$. 

\noindent By the \CFT, there exists $C\in\R$ such that
\begin{eqnarray}
(F-G)(x) & = & C,\nonumber\\
F(x)-G(x) & = & C,\nonumber\\
F(x)-\int_a^xf &=&C.\label{FTC2eq}
\end{eqnarray}
In particular, at $x=a$, \eqref{FTC2eq} becomes
\begin{eqnarray} 
F(a)-\int_a^af &=&C,\nonumber\\
F(a)-0 &=&C,\nonumber\\
F(a) &=&C,\nonumber
\end{eqnarray}
which we substitute to \eqref{FTC2eq} to obtain $\displaystyle\int_a^xf=F(x)-F(a)$. In particular, this is true at $x=b$. Therefore, $\displaystyle\int_a^bf=F(b)-F(a)$. 
\end{proof}

\subsection{\normalsize Building an exposition of elementary real function theory}

If the teaching of elementary real analysis or of advanced calculus is to start from basic epsilon-delta definitions, the recommended approach is via supremum arguments or nested interval arguments. The exposition may start from preliminaries about properties of inequalities, absolute value and the use of nested quantifiers. (For nested interval arguments, a few additional preliminaries are needed, such as \AP\  or the inequality $n<2^n$. See the beginning of Chapter~\ref{NestCh}, before Section~\ref{NestIntGenProofSec}. Sequences are not needed in both approaches. For the third approach, as described in Section~\ref{HeineSumSec}, minimal topology needs to be introduced if Heine-Borel arguments are desired, while the sequential approach requires the basics of convergent sequences to be introduced first, and in this last approach, we have not recommended only one structure for the proofs. We now go back to the use of either supremum arguments or nested interval arguments.) Choose the theorems that shall initiate the theory, say \BVT, \IVT, \DIT\  and \IFT, and then consult the general structure of the proof in Section~\ref{SupGenProofSec} or \ref{NestIntGenProofSec}. ``Instantiate the general proof,'' by which we mean write the specific version of the proof for each of the chosen theorems \BVT, \IVT, \DIT\  and \IFT\  in this example, so as to eliminate the need to mention to the students or readers the various transitivity notions and inclusion-preservation notions that we defined for the propositional function $\Property$. These might baffle the student, but we needed them in order to present the general structure of the proof for one approach. Now the task is to be specific. Repeating the argument form four times in this example may serve as a pedagogical tool in engraving the argument form to the learner's consciousness. Then, follow the logical dependencies described in Section~\ref{buildSec} with the goal of arriving at the desired end: the three theorems in Sections~\ref{MVTsec} and \ref{FTCsec}. 





\end{document}